\DeclareMathAlphabet{\pazocal}{OMS}{zplm}{m}{n}
\newtheorem{theorem}{Theorem}[section]
\newtheorem{lemma}[theorem]{Lemma}
\newtheorem{proposition}[theorem]{Proposition}
\newtheorem{corollary}[theorem]{Corollary}
\theoremstyle{definition}
\newtheorem{definition}[theorem]{Definition}
\newtheorem{example}[theorem]{Example}
\theoremstyle{remark}
\newtheorem{remark}[theorem]{Remark}
\numberwithin{equation}{section}
\newcommand{\R}{\ensuremath{\mathbb{R}}}
\newcommand{\N}{\ensuremath{\mathbb{N}}}
\newcommand{\Z}{\ensuremath{\mathbb{Z}}}
\newcommand{\TT}{\mathbb{T}}
\renewcommand{\S} {\ensuremath{\widetilde{\mathbf{B}}}}
\newcommand{\wS}{\widetilde{S}}
\newcommand{\wL}{\widetilde{L}}
\newcommand{\wE}{\widetilde{E}}
\newcommand{\wB}{\widetilde{B}}
\newcommand{\cR}{\mathcal{R}}
\newcommand{\sa} { {\mathbf{a}}}
\renewcommand{\sb}{\mathbf{b}}
\renewcommand{\ss}{\mathbf{s}}
\newcommand{\set}[1]{\left\{#1\right\}}
\newcommand{\ga}{\gamma}
\newcommand{\Ga}{\Gamma}
\newcommand{\tGa}{\widetilde{\Gamma}}
\newcommand{\ep}{\varepsilon}
\newcommand{\f}{\infty}
\newcommand{\al}{\alpha}
\newcommand{\CC}{\mathscr{C}}
\newcommand{\lle}{\preccurlyeq}
\newcommand{\lge}{\succcurlyeq}
\renewcommand{\b}{\mathbf{b}}
\newcommand{\si}{\sigma}
\newcommand{\La}{\Lambda}
\begin{document}
	
\title[]{Density spectrum of Cantor measure}

\author[Pieter Allaart]{Pieter Allaart$^*$}
\address[P. Allaart]{Mathematics Department, University of North Texas, 1155 Union Cir \#311430, Denton, TX 76203-5017, U.S.A.}
\email{allaart@unt.edu}
	
	\author[Derong Kong]{Derong Kong}
	\address[D. Kong]{College of Mathematics and Statistics, Chongqing University,  401331, Chongqing, P.R.China}
	\email{derongkong@126.com}

	\dedicatory{}


	\subjclass[2010]{Primary: 28A80, Secondary: 28A78, 37B10, 26A30}

\begin{abstract}
Given $\rho\in(0, 1/3]$, let $\mu$ be the Cantor measure satisfying $\mu=\frac{1}{2}\mu  f_0^{-1}+\frac{1}{2}\mu  f_1^{-1}$, where $f_i(x)=\rho x+i(1-\rho)$ for $i=0, 1$. The support of $\mu$ is a Cantor set $C$ generated by the iterated function system $\{f_0, f_1\}$. Continuing the work of Feng et al.~(2000) on the pointwise lower and upper densities
\[
\Theta_*^s(\mu, x)=\liminf_{r\to 0}\frac{\mu(B(x,r))}{(2r)^s},\qquad \Theta^{*s}(\mu, x)=\limsup_{r\to 0}\frac{\mu(B(x,r))}{(2r)^s},
\]
where $s=-\log 2/\log\rho$ is the Hausdorff dimension of $C$, we give a complete description of the sets $D_*$ and $D^*$  consisting of all possible values of the lower and upper densities, respectively. We show that both sets contain infinitely many isolated and infinitely many accumulation points, and they have the same Hausdorff dimension as the Cantor set $C$. Furthermore, we compute the Hausdorff dimension of the level sets of the lower and upper densities.
{Our method consists in formulating an equivalent ``dyadic" version of the problem involving the doubling map on $[0,1)$, which we solve by using known results on the entropy of a certain open dynamical system and the notion of tuning.}
\end{abstract}
	
\thanks{$^*$ Corresponding author.}
	\keywords{Pointwise density, Cantor measure, Density spectrum, Level set, Hausdorff dimension, Entropy, Doubling map, Bifurcation set, Tuning map}
	
	\maketitle

\section{Introduction} \label{sec:introduction}

For $0<\rho<1/2$ let $C=C_\rho$ be the Cantor set generated by the iterated function system
\[
f_0(x)=\rho x,\qquad f_1(x)=\rho x+(1-\rho).
\]
Then
\begin{equation} \label{eq:Cantor-set-dimension}
\dim_H C=\frac{\log 2}{-\log\rho}=:s(\rho)=s,
\end{equation}
where $\dim_H$ denotes Hausdorff dimension. Let $\mu=\mu_\rho$ be the natural probability measure on $C$, that is, the unique probability measure such that
\[
\mu(A)=\frac{1}{2}\mu  \big(f_0^{-1}(A)\big)+\frac{1}{2}\mu \big( f_1^{-1}(A)\big)\quad\textrm{for any Borel set }A\subseteq \R.
\]
The measure $\mu$ is called a \emph{Cantor measure}, and is equal to the normalized $s$-dimensional Hausdorff measure restricted to $C$ (cf.~\cite{Falconer_1990}).  Given a point $x\in C$, the pointwise \emph{lower and upper $s$-densities} of $\mu$ at $x$ are defined respectively by
\[
\Theta_*^s(\mu, x):=\liminf_{r\to 0}\frac{\mu(B(x, r))}{(2r)^s}\qquad\mbox{and}\qquad \Theta^{*s}(\mu,x):=\limsup_{r\to 0}\frac{\mu(B(x, r))}{(2r)^s},
\]
where $B(x, r):=(x-r, x+r)$.
Densities of Cantor measures have been studied extensively (cf.~\cite{Falconer_1990, Mattila_1995}). The following two properties of $\mu$ are well known:
\begin{itemize}
  \item There exist finite positive constants $a$ and $b$ such that $a r^s\le \mu(B(x, r))\le b r^s$ for any $x\in C$ and $0<r\le 1$.
  \item There exist $0<d_*<d^*<\f$ such that for $\mu$-almost every $x\in C$ we have $\Theta_*^s(\mu, x)=d_*$ and $\Theta^{*s}(\mu, x)=d^*$.
\end{itemize}
We observe that there exist alternative density constructions for measures which may be better suited for the study of  Cantor measures; for instance the C\'esaro averaging construction in \cite{BF-1992} yields an almost everywhere density for the Cantor measure $\mu$.

In 2000, Feng, Hua and Wen \cite{Feng-Hua-Wen-2000} explicitly calculated the pointwise densities $\Theta_*^s(\mu, x)$ and $\Theta^{*s}(\mu, x)$ for every $x\in C$. To summarize their main results we first introduce a $2$-to-$1$ map $T: [0, \rho]\cup [1-\rho, 1]~\to [0, 1]$ defined by
\begin{equation*} 
T(x):=\begin{cases}
  \frac{x}{\rho} & \textrm{if}\ 0\le x\le \rho,\\
  \frac{x-(1-\rho)}{\rho} & \textrm{if}\ 1-\rho\le x\le 1.
\end{cases}
\end{equation*}
{Thus, the inverse branches of $T$ are the maps $f_0$ and $f_1$.}
Then for each $x\in C$ we set
\[
\tau(x):=\min\set{\liminf_{n\to\f}T^n (x), \liminf_{n\to\f}T^n(1-x)}.
\]

\begin{theorem}[Feng et al.~\cite{Feng-Hua-Wen-2000}] \label{th:feng-etal}
Let $0<\rho\le 1/3$. Then for any $x\in C$ the pointwise lower $s$-density of $\mu$ at $x$ is given by
\[
\Theta_*^s(\mu, x)=\frac{1}{2^{1+s}(1-\rho-\tau(x))^s},
\]
and the pointwise upper $s$-density of $\mu$ at $x$ is given by
\[
\Theta^{*s}(\mu, x)=\begin{cases}
  2^{-s} & \textrm{if}\ T^n(x)\in\set{0, 1}\textrm{ for some }n\ge 0,\\
  \frac{1}{2^s(1-\rho+\rho\tau(x))^s}& \textrm{otherwise}.
\end{cases}
\]
\end{theorem}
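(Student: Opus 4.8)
The plan is to use the self-similarity of $(C,\mu)$ to reduce the two-sided density at $x$ to a family of one-sided densities at the orbit points $T^nx$, to evaluate those from the gap structure of $C$, and then to translate the resulting $\liminf$/$\limsup$ along the orbit into $\tau(x)$.

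I would first pass to the symbolic picture: writing $x\in C$ as $x=(1-\rho)\sum_{n\ge1}a_n\rho^{n-1}$ with $a_n\in\{0,1\}$, the measure $\mu$ is the push-forward of the $(\tfrac12,\tfrac12)$-Bernoulli measure, $T$ restricted to $C$ is the shift, the reflection $x\mapsto1-x$ is the digit flip (so $T^n(1-x)=1-T^nx$, whence $\tau(x)=\min\{\liminf_nT^nx,\,1-\limsup_nT^nx\}$), and the level-$n$ cylinder $I_n\ni x$ has length $\rho^n$ and mass $2^{-n}=(\rho^n)^s$. The renormalization key is that whenever $B(x,r)\subseteq I_n$, rescaling $I_n$ to $[0,1]$ gives $\mu(B(x,r))/(2r)^s=\mu(B(T^nx,r/\rho^n))/(2r/\rho^n)^s$, since $\mu|_{I_n}$ rescales to $2^{-n}\mu$ and $\rho^{ns}=2^{-n}$. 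For $x$ not a cylinder endpoint, let $n(r)$ be the largest $n$ with $B(x,r)\subseteq I_n$; because $B(x,r)\subseteq I_{n+1}$ forces $B(x,r)\subseteq I_n$, the admissible radii at successive levels tile an interval $(0,\delta]$, and a self-similar consistency argument then yields
\[
\Theta_*^s(\mu,x)=\liminf_{n\to\infty}G(T^nx),\qquad \Theta^{*s}(\mu,x)=\limsup_{n\to\infty}H(T^nx),
\]
where $G(z):=\inf\{\mu(B(z,t))/(2t)^s:\,0<t\le\min(z,1-z)\}$ and $H(z):=\sup$ of the same quantity.

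Next I would compute $G$ and $H$. By reflection symmetry assume $z\in[0,\rho]$. If $z\le\rho/2$ every admissible ball stays inside $[0,\rho]$, so $G(z)=G(Tz)$ and $H(z)=H(Tz)$; if $z\in(\rho/2,\rho]$ the extremal ball either stays in $[0,\rho]$ (again reducing to $Tz$) or pokes into the big gap $(\rho,1-\rho)$. In the latter case, inspecting where the Cantor staircase $w\mapsto\mu(C\cap[0,w])$ grows shows that for the infimum the optimal ball runs from a cylinder endpoint on the left across the whole gap, contributing $\tfrac1{2^{2+s}(z-\rho^2)^s}$ (valid and — this needs a finite but slightly tedious comparison of candidate radii — optimal for $z$ in the range that will matter), while for the supremum the optimal ball is $[0,2z]$, which just covers the mass-$\tfrac12$ set $C\cap[0,\rho]$, contributing $\tfrac1{2^{1+s}z^s}$. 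Unwinding the recursion along the orbit converts the formulas into $\Theta_*^s(\mu,x)=\liminf_{m}\widetilde\Phi(T^mx)$ and $\Theta^{*s}(\mu,x)=\limsup_{m}\widetilde\Psi(T^mx)$, where on $(\rho/2,\rho]$ one has $\widetilde\Phi(w)=\tfrac1{2^{2+s}(w-\rho^2)^s}$ and $\widetilde\Psi(w)=\tfrac1{2^{1+s}w^s}$, extended to $[1-\rho,1-\rho/2)$ by $w\mapsto 1-w$ and set to $+\infty$ (resp.\ $-\infty$) elsewhere. Note $\widetilde\Phi$ decreases toward $\rho$ while $\widetilde\Psi$ decreases away from $\rho$: the lower density is thus driven by the orbit's closest approach to $\{\rho,1-\rho\}$ and the upper density by its furthest retreat within $(\rho/2,\rho]\cup[1-\rho,1-\rho/2)$.

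The heart of the argument is then an orbit lemma. Let $S$ be the set of accumulation points of $(T^nx)_n$; it is closed, $T$-forward-invariant, contained in $[0,\rho]\cup[1-\rho,1]$, with $\min S=\liminf_nT^nx$ and $\max S=\limsup_nT^nx$. I would prove
\[
\max\bigl(\sup(S\cap[0,\rho]),\ 1-\inf(S\cap[1-\rho,1])\bigr)=\rho\bigl(1-\tau(x)\bigr),
\]
together with the companion identity $\min\bigl(\inf(S\cap(\rho/2,\rho]),\,1-\sup(S\cap[1-\rho,1-\rho/2))\bigr)=\rho(1-\rho)+\rho^2\tau(x)$ needed for the upper density. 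The upper bounds are soft consequences of $T$-invariance: $w\in S\cap[0,\rho]$ forces $w/\rho=Tw\in S$, so $w\le\rho\max S$, and $u\in S\cap[1-\rho,1]$ forces $(u-(1-\rho))/\rho\in S$, so $u\ge 1-\rho(1-\min S)$. For the matching lower bounds one takes a subsequence converging to $\max S$ (resp.\ $\min S$) and walks a bounded number of steps back through the address of $x$: when $\max S<1$, infinitely often the step back from a near-$\max S$ point must be $\rho$-contracting — otherwise one manufactures a limit point exceeding $\max S$ — producing a limit point realizing the claimed value; the symmetric side is handled by applying this to $1-x$, and the degenerate cases $\min S=0$, $\max S=1$ by the observation that a long run of $0$'s (resp.\ $1$'s) in the address is preceded by a $1$ (resp.\ $0$), forcing $1-\rho\in S$ (resp.\ $\rho\in S$). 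Checking that $\tau(x)<(1-\rho)/2$ (which uses $\rho\le1/3$) guarantees $\rho(1-\tau(x))$ and $\rho(1-\rho)+\rho^2\tau(x)$ fall where the formulas for $\widetilde\Phi,\widetilde\Psi$ apply, and substituting gives $\Theta_*^s(\mu,x)=\tfrac1{2^{1+s}(1-\rho-\tau(x))^s}$ and the generic branch of $\Theta^{*s}$. Finally the countably many $x$ with $T^Nx\in\{0,1\}$ must be treated by hand, since for them $B(x,r)$ never lies in the relevant cylinder; a direct computation at $r=\rho^n(1-\rho)$ and $r=\rho^n$ gives $\Theta_*^s=\tfrac1{2^{1+s}(1-\rho)^s}$ (matching, as $\tau=0$ there) and $\Theta^{*s}=2^{-s}$, the latter using $\mu(C\cap[0,u])\le u^s$. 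The main obstacle is the orbit lemma: the upper bounds are easy, but the lower bounds require a careful combinatorial analysis of the runs of $0$'s and $1$'s in the address of $x$ and of how the induced ``climbs'' $w,w/\rho,w/\rho^2,\dots$ interact with $\liminf$ and $\limsup$; obtaining the genuinely two-sided maximum — i.e.\ exploiting that $\{\rho,1-\rho\}$ may be reached ``from the far side'' — is the subtlest and most error-prone point, with the exact optimality of the gap-spanning ball in the computation of $G$ a secondary nuisance.
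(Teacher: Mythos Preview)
The paper does not contain a proof of this theorem: it is quoted verbatim from Feng, Hua, and Wen~\cite{Feng-Hua-Wen-2000} and used as a black-box input for the rest of the article. There is therefore nothing in the present paper to compare your proposal against.

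That said, your outline is essentially the strategy of the original Feng--Hua--Wen argument: use self-similarity to reduce the two-sided density at $x$ to one-sided density functionals along the $T$-orbit, evaluate those from the gap structure of $C$, and then identify the resulting $\liminf/\limsup$ with $\tau(x)$ via an orbit lemma. Your algebraic bookkeeping checks out (substituting $w=\rho(1-\tau)$ into $\widetilde\Phi$ and $w=\rho(1-\rho)+\rho^2\tau$ into $\widetilde\Psi$ recovers exactly the stated formulas, using $\rho^s=1/2$), and the walking-back argument you sketch for the orbit lemma is the right mechanism. The two soft spots you already flag are real: the optimality of the gap-spanning ball in the computation of $G$ is where the hypothesis $\rho\le 1/3$ genuinely enters (and the argument breaks for larger $\rho$, which is why~\cite{Wang-Wu-Xiong-2011} needed additional work to push the threshold up to $\rho^*$), and the attainment half of the orbit lemma---showing that the claimed extremal value is actually a limit point of the orbit, on the correct side---needs the digit-run analysis written out carefully, including the degenerate endpoint case and the symmetric branch. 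None of this is a gap in the strategy; it is just where the work lies.
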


Observe that $\tau(x)=0$ for $\mu$-almost every $x\in C$, so by Theorem \ref{th:feng-etal} the almost sure lower and upper $s$-densities of $\mu$ are
\[
d_*=\frac{1}{2^{1+s}(1-\rho)^s}\quad\textrm{and}\quad d^*=\frac{1}{2^s(1-\rho)^s}.
\]
In particular, $d^*=2d_*$, independent of $\rho$.
Wang, Wu and Xiong \cite{Wang-Wu-Xiong-2011} later extended Theorem \ref{th:feng-etal} to the interval $0<\rho\leq\rho^*$, where $\rho^*\approx .351811$ satisfies $(1-\rho^*)^{s(\rho^*)}=3/4$. However, they showed that for $\rho>\rho^*$, the formula for the upper density changes. Recently, Kong, Li and Yao \cite{KLY-2021} extended Theorem \ref{th:feng-etal} to the case of a general homogeneous Cantor measure.

One might ask what the range of possible values is for $\Theta_*^s(\mu,x)$ and $\Theta^{*s}(\mu,x)$. Noting that both of these numbers depend on $x$ only through the quantity $\tau(x)$, it is sufficient to determine the set
\[
\Ga:=\set{\tau(x): x\in C},
\]
which we call the \emph{density spectrum} of the Cantor measure $\mu$. Furthermore, we can decompose the Cantor set $C$ according to the values of the lower (or upper) density, and this reveals an interesting multifractal structure. Specifically, we are interested in the Hausdorff dimension of the level sets
\begin{equation*} 
L_*(y):=\set{x\in C: \Theta_*^s(\mu, x)=y}\quad\textrm{and}\quad L^*(z):=\set{x\in C:\Theta^{*s}(\mu, x)=z}
\end{equation*}
for $y\in D_*:=\set{\Theta_*^s(\mu, x): x\in C}$  and $z\in D^*:=\set{\Theta^{*s}(\mu, x): x\in C}$. It is sufficient to study the level sets
\[
L(t):=\{x\in C: \tau(x)=t\}, \qquad t\in\Ga,
\]
in view of the identities
\[
L_*(y)=L\left(1-\rho-\frac12(2y)^{-1/s}\right), \qquad L^*(z)=L\left(\frac{2(1-\rho)-z^{-1/s}}{2\rho}\right).
\]

Our results presented below are valid for all $0<\rho<1/2$. However, the reader should bear in mind that they can be interpreted in terms of the upper and lower densities of $\mu$ only when $\rho\leq\rho^*\approx .351811$.

A first observation is that $\Ga\subseteq C$, since $\liminf_{n\to\f}T^n (x)\in C$ and $\liminf_{n\to\f}T^n(1-x)\in C$ for every $x\in C$. In \cite[Theorem 1.3]{Wang-Wu-Xiong-2011} Wang, Wu and Xiong initiated the study of $\Ga$, but their characterization was not fully explicit. By connecting the quantities $\tau(x)$ with the entropy of a certain open dynamical system, we are able to give a much more explicit description of $\Gamma$. Our first main result concerns the topological properties of $\Ga$.

\begin{theorem} \label{main:Ga-topology}
The density spectrum $\Ga$ can be represented as
\begin{equation} \label{eq:Gamma-representation}
\Ga=\set{t\in C: t\le T^n(t)\le 1-t~\forall n\ge 0}.
\end{equation}
Therefore, $\Ga$ is a closed subset of $C$, and it contains both infinitely many isolated and infinitely many accumulation points. Furthermore,
  \begin{enumerate}[{\rm(i)}]
  \item $t$ is isolated in $\Ga$ if and only if $T^n (t)=1-t$ for some $n\in\N$;
  \item the smallest element $0$ of $\Ga$ is an accumulation point, while
  the largest element ${\rho}/({1+\rho})$ of $\Ga$ is an isolated point.
\end{enumerate}
\end{theorem}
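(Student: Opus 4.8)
The plan is to recast everything symbolically. Since $\rho\le 1/3<1/2$, each $x\in C$ has a unique expansion $x=(1-\rho)\sum_{n\ge 1}a_n\rho^{n-1}$ with $a_n\in\{0,1\}$, and the coding map $\pi\colon\{0,1\}^{\N}\to C$ is a homeomorphism that is strictly increasing for the lexicographic order. Under $\pi$ the map $T$ becomes the shift $\sigma$, and the reflection $x\mapsto 1-x$ becomes complementation $\la\mapsto\overline\la$ (using $(1-\rho)\sum_{n\ge 1}\rho^{n-1}=1$). Writing $\omega(a)$ for the set of accumulation points of the orbit $(\sigma^{n}a)_{n\ge 0}$, one checks, using that $\pi$ is continuous and increasing and $\omega(a)$ is compact, that $\liminf_n T^n\pi(a)=\pi(\min\omega(a))$ and $\liminf_n T^n\pi(\overline a)=1-\pi(\max\omega(a))$ (here $\omega(\overline a)=\set{\overline b:b\in\omega(a)}$), so
\[
\tau(\pi(a))=\min\set{\pi(\min\omega(a)),\ 1-\pi(\max\omega(a))}.
\]

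For the inclusion ``$\subseteq$'' in \eqref{eq:Gamma-representation}, let $t=\tau(x)$ and $a=\pi^{-1}(x)$. Replacing $x$ by $1-x$ if necessary (which changes neither $\tau(x)$ nor the right-hand side of \eqref{eq:Gamma-representation}) we may assume $t=\pi(\ga)$ with $\ga:=\min\omega(a)$; then $t\le 1-\pi(\max\omega(a))$, so $\pi(\max\omega(a))\le 1-t$. Since $\omega(a)$ is compact and $\sigma$-invariant, $\sigma^{k}\ga\in\omega(a)$ for every $k$, hence $\ga\lle\sigma^{k}\ga\lle\max\omega(a)$; applying the increasing map $\pi$ gives $t\le T^{k}t\le 1-t$ for all $k\ge 0$, so $t$ lies in the right-hand set. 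The reverse inclusion ``$\supseteq$'' is the heart of the argument: given $t\in C$ whose code $\la=\pi^{-1}(t)$ satisfies $\la\lle\sigma^{n}\la\lle\overline\la$ for all $n\ge 0$ (call such $\la$ \emph{admissible}), we must produce $x\in C$ with $\tau(x)=t$. If $\la$ is $\sigma$-recurrent --- in particular if it is periodic --- then $x=t$ itself works, since then $\min\omega(\la)=\la$ and $\max\omega(\la)\lle\overline\la$. Otherwise we build the code $a$ of $x$ by concatenating longer and longer prefixes of $\la$ at suitable ``admissible cut'' positions $L_{1}<L_{2}<\cdots$ (for instance positions with $\la_{L_{j}+1}=0$), separated by short words $g_{j}$ chosen so that \emph{every} shift of $a$ remains in the interval $[\la,\overline\la]$; here one uses that admissibility of $\la$ forbids the factors $\la_{1}\cdots\la_{m-1}0$ (when $\la_{m}=1$) and $\overline{\la_{1}\cdots\la_{m-1}}1$ (when $\la_{m}=1$) from occurring in $\la$ at all, so the prefix blocks never create violations and only the finitely many junctions per block must be handled. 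Because the blocks dominate the $g_{j}$ in length, the orbit of $a$ accumulates at $\la$, whence $\liminf_{n}T^{n}\pi(a)=t$ while $\liminf_{n}T^{n}\pi(\overline a)\ge t$, giving $\tau(\pi(a))=t$. This construction --- together with the admissibility combinatorics governing the choice of cuts and glue, which is exactly where the structural results on unique non-integer base expansions enter --- I expect to be the main obstacle; the complementary analysis of orbits that approach $\overline\la$ rather than $\la$ is handled by the same reflection symmetry.

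Once \eqref{eq:Gamma-representation} is established, the topological statements follow by analysing admissible sequences, an open-dynamics picture being the survivor set $\set{y\in C:T^{n}y\in[t,1-t]\ \forall n}$. Closedness is immediate: each condition ``$\la\lle\sigma^{n}\la$'' and ``$\sigma^{n}\la\lle\overline\la$'' cuts out a closed subset of $\{0,1\}^{\N}$ (the lexicographic order is a closed relation on $\{0,1\}^{\N}$), and $\Ga$ is the $\pi$-image of the intersection of these over $n\ge 0$. For (i), if $T^{n_{0}}t=1-t$ for some $n_{0}\ge 1$ then $\sigma^{n_{0}}\la=\overline\la$, which forces $\sigma^{2n_{0}}\la=\la$, i.e.\ $\la=(u\overline u)^{\infty}$ with $u=\la_{1}\cdots\la_{n_{0}}$; such a $\la$ sits at the ``upper edge'' of the admissible region, and a short combinatorial check shows that any admissible $\la'$ sharing a sufficiently long prefix with it must equal it, so $t$ is isolated. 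Conversely, if $T^{n}t\ne 1-t$ for all $n$ --- equivalently $\sigma^{n}\la\prec\overline\la$ strictly for every $n\ge 0$ --- then one can perturb $\la$ (roughly, replace a long prefix of $\la$ by that prefix followed by a carefully chosen admissible tail) to produce admissible sequences $\ne\la$ arbitrarily lexicographically close to $\la$, so $t$ is an accumulation point of $\Ga$. Infinitely many isolated points then arise from the family $\pi\bigl((0^{k}1^{k})^{\infty}\bigr)$, $k\ge 1$ (each is admissible and satisfies $\sigma^{k}\la=\overline\la$); infinitely many accumulation points from $\pi\bigl((0^{k}1)^{\infty}\bigr)$, $k\ge 2$ (each is admissible, and since no shift of $(0^{k}1)^{\infty}$ equals $(1^{k}0)^{\infty}$, none is isolated). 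Finally, for (ii): $0=\pi(0^{\infty})\in\Ga$ is an accumulation point because $\pi\bigl((0^{k}1)^{\infty}\bigr)\in\Ga$ and tends to $0$ as $k\to\infty$, while the largest element $\rho/(1+\rho)=\pi\bigl((01)^{\infty}\bigr)$ satisfies $T\bigl(\rho/(1+\rho)\bigr)=\pi\bigl((10)^{\infty}\bigr)=1-\rho/(1+\rho)$ and is therefore isolated by (i).
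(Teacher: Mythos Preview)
Your overall strategy matches the paper's: translate to symbolic dynamics, identify $\Gamma$ with the set of ``admissible'' codings $\lambda$ satisfying $\lambda\preccurlyeq\sigma^n\lambda\preccurlyeq\overline\lambda$, and deduce the topology from the combinatorics of such sequences. Your use of $\omega$-limit sets for the inclusion $\Gamma\subseteq\{t:t\le T^n t\le 1-t\}$ is clean and equivalent to the paper's continuity argument, and your observation that $\tau(t)=t$ whenever $\lambda=\pi^{-1}(t)$ is \emph{recurrent} is a genuine simplification over the paper, which only uses this for periodic $\lambda$.

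There are, however, two real gaps, both at precisely the places where the paper imports results from the theory of unique $q$-expansions.

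\textbf{The non-recurrent case.} Admissible non-recurrent $\lambda$ do exist: for any admissible word $\sa$ (in the sense of Definition~\ref{def:admissible}), the sequence $\lambda=\overline{\sa^+}\sa^\infty$ is admissible but eventually periodic and not recurrent (e.g.\ $\lambda=00(10)^\infty$). For these you must construct $x$ with $\tau(x)=t$, and your sketch does not work as stated. Cutting at positions with $\lambda_{L+1}=0$ is insufficient: if $\lambda_{k+1}\ldots\lambda_L=\lambda_1\ldots\lambda_{L-k}$ while $\sigma^{L-k}\lambda\succ\lambda$ strictly, then the concatenation $\lambda_1\ldots\lambda_L\lambda_1\lambda_2\ldots$ has $\sigma^k$ dropping \emph{below} $\lambda$. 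The correct condition on the prefix is the admissibility of Definition~\ref{def:admissible} (in the paper's reflected coordinates: $\alpha_1\ldots\alpha_m^-$ admissible), and the fact that infinitely many such prefixes exist for every accumulation point of $\Gamma$ is Lemma~\ref{lem:admissible-prefixes}, a nontrivial result of Komornik--Loreti. The paper's construction (Proposition~\ref{prop:char-jump}) simply concatenates these admissible prefixes with no glue.

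\textbf{The isolated-point characterization.} Your ``short combinatorial check'' that $\lambda=(u\overline u)^\infty$ is isolated, and your ``perturb $\lambda$'' argument for the converse, are both genuine results, not routine. The paper does not prove them either: it invokes \cite[Theorem~2.6]{Komornik_Loreti_2007} for both directions (see Lemma~\ref{lem:isolated-point-of-V}). If you want a self-contained proof you will need to supply these arguments in full; in particular, the converse requires producing admissible $\lambda'\ne\lambda$ arbitrarily close to $\lambda$ whenever $\sigma^n\lambda\prec\overline\lambda$ strictly for all $n$, which again hinges on the admissible-prefix machinery.

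Your explicit families $(0^k1^k)^\infty$ and $(0^k1)^\infty$ and the computation for $\rho/(1+\rho)$ are correct and give the concluding statements once (i) is established.
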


\begin{remark}
The representation \eqref{eq:Gamma-representation} shows that $\Ga$ is closely related to the set of kneading invariants for unimodal maps introduced by Allouche and Cosnard \cite{Allouche_Cosnard_1983,Allouche-Cosnard-2001}, namely
\begin{equation} \label{eq:Gamma-AC-def}
\Ga_{AC}:=\{x\in[0,1): D^n(x)\in[1-x,x]\ \forall n\geq 0\},
\end{equation}
where $D:[0,1)\to[0,1)$ is the doubling map $D(x):=2x \pmod 1$. We will explain this connection, and use it to prove Theorem \ref{main:Ga-topology}, in Section \ref{sec:Gamma}.
\end{remark}

Our second main result describes the dimensional properties of $\Ga$. First we introduce a family of auxiliary sets
\begin{equation} \label{eq:S_tau}
S(t):=\set{x\in C: \tau(x)\ge t},\quad t\in[0,1].
\end{equation}

\begin{theorem} \label{main:Ga-dimension}
\begin{enumerate}[{\rm(i)}]
\item The set $\Ga$ has full Hausdorff dimension:
  \[
	\dim_H\Ga=\dim_H C=s(\rho).
	\]
 \item
 For any $t\in(0, 1]$ we have
 \[
  \dim_H(\Ga\cap[t, 1])=\dim_H S(t)<\dim_H\Ga.
 \]
 \item The map
	\begin{equation} \label{eq:psi}
	\delta: t\mapsto\dim_H(\Ga\cap[t, 1])
	\end{equation}
	is a non-increasing devil's staircase on $[0, 1]$, i.e., $\delta$ is non-increasing, continuous and locally constant almost everywhere in $[0, 1]$; see Figure \ref{fig:1}.
\end{enumerate}
\end{theorem}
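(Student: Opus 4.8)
The plan is to recast everything symbolically and reduce the three parts to properties of two‑sided survivor sets. Let $\pi\colon\{0,1\}^{\N}\to C$, $\pi((a_n))=(1-\rho)\sum_{n\ge1}a_n\rho^{n-1}$; since $\rho\le 1/3<1/2$, $\pi$ is an order‑preserving homeomorphism conjugating $T|_C$ to the shift $\si$, and $\pi(\overline a)=1-\pi(a)$ for the reflected sequence $\overline a$. Because reflecting a code commutes with $\si$, one obtains the identity $T^n(1-x)=1-T^n x$ on $C$, whence $\tau(x)=\min\{\liminf_n T^n x,\ 1-\limsup_n T^n x\}$ and $S_\tau(t)=\set{x\in C:\liminf_n T^n x\ge t\text{ and }\limsup_n T^n x\le 1-t}$. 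I then introduce the $x\mapsto 1-x$ symmetric survivor set $K_t:=\set{x\in C:T^n x\in[t,1-t]\ \forall n\ge0}$ with coding subshift $\Sigma_t$. For any $\ep\in(0,t)$ the orbit of $x\in S_\tau(t)$ eventually enters $[t-\ep,1-t+\ep]$, so $K_t\subseteq S_\tau(t)\subseteq\bigcup_{N\ge0}\bigcup_{|w|=N}f_w(K_{t-\ep})$ with $f_w=f_{w_1}\circ\cdots\circ f_{w_N}$; countable stability of $\dim_H$ then gives $\dim_H S_\tau(t)=\lim_{\ep\downarrow0}\dim_H K_{t-\ep}$.

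The engine is the statement: \emph{$t\mapsto\dim_H K_t$ is a non‑increasing Devil's staircase on $[0,1]$ with $\dim_H K_0=\dim_H C=s$ and $\dim_H K_t<s$ for every $t>0$.} I would obtain this by identifying $\Sigma_t$ with a univoque‑type subshift $\mathbf V_{\mathbf d}$, where $\mathbf d$ is the quasi‑greedy code of $t$, and invoking the known monotonicity, continuity, and locally‑constant‑a.e.\ behaviour of $\dim_H \mathbf V_{\mathbf d}$ as $\mathbf d$ varies; the strict inequality for $t>0$ is immediate, since then the hole $[0,t)$ contains a whole cylinder $[0^j]$ of $C$, so $\Sigma_t$ forbids the word $0^j$ and hence has topological entropy $<\log 2$. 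Combined with the previous paragraph and the continuity of $t\mapsto\dim_H K_t$, this yields $\dim_H S_\tau(t)=\dim_H K_t$ for all $t\in[0,1]$ — precisely the quantity appearing in (ii) and in the definition of $\psi$.

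For $\dim_H\Ga=\dim_H C=s$ and part (i): the bound $\dim_H\Ga\le s$ is trivial. For the lower bound I would verify that $f_0^k(K_{\rho^k})\subseteq\Ga\cap[0,\rho^k]$ for every $k\ge1$: if $y\in K_{\rho^k}$ then $x:=\rho^k y$ has code $0^k$ followed by the code of $y$, and a short computation — this is where $\rho\le 1/3$, i.e.\ $\rho\le 1-2\rho$, is used — shows $x\le T^n x\le 1-x$ for all $n\ge0$. Since $f_0^k$ is a similarity, $\dim_H f_0^k(K_{\rho^k})=\dim_H K_{\rho^k}$, and $\dim_H K_{\rho^k}\uparrow s$ as $k\to\infty$ because $K_\eta$ contains the $\pi$‑image of the subshift of finite type forbidding all runs of length $j(\eta)$, with $j(\eta)\to\infty$ as $\eta\downarrow0$. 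Hence for every $t>0$, choosing $k$ with $\rho^k<t$ gives $\dim_H(\Ga\cap[0,t])\ge\sup_{k:\rho^k<t}\dim_H K_{\rho^k}=s$, proving simultaneously $\dim_H\Ga=s$ and statement (i).

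For (ii), the inclusion $\Ga\cap[t,1]\subseteq S_\tau(t)$ follows from Theorem \ref{main:Ga-topology}: if $\ga\in\Ga$ then $\ga\le T^n\ga\le1-\ga$ for all $n$, so $\tau(\ga)\ge\ga\ge t$; with $\dim_H S_\tau(t)=\dim_H K_t<s$ this gives $\dim_H(\Ga\cap[t,1])\le\dim_H S_\tau(t)<\dim_H\Ga$ (the case $t>\rho/(1+\rho)$, where both sides vanish, is trivial). The reverse dimension bound is the crux: I aim to prove $\dim_H(\Ga\cap[t,1])\ge\dim_H K_{t'}$ for each $t'>t$ and then let $t'\downarrow t$. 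After reducing to $t\in\Ga$ (replace $t$ by $\min(\Ga\cap[t,\infty))$, which changes neither $\Ga\cap[t,1]$ nor $S_\tau(t)$), let $\mathbf b=(b_n)$ be the code of $t$; it has infinitely many $0$'s and $1$'s since $T^n t\in(0,1)$ for all $n$. I would use a prefix‑gluing map sending each $y\in K_{t'}$ whose code begins with $1$ to the point $z$ with code $b_1\cdots b_k$ followed by the code of $y$, for a suitably chosen large $k$; this is a ratio‑$\rho^k$ similarity onto its image, hence dimension‑preserving, and the work is to check $z\in\Ga\cap(t,t']$. Writing $z=t+\rho^k\Delta$ and $T^n z=T^n t+\rho^{k-n}\Delta$ with $\Delta\in[1-2\rho,1]$ measuring the mismatch at the junction, the inequalities $T^n z\ge z$ are automatic (from $T^n t\ge t$ and $\rho^{k-n}\ge\rho^k$), while $T^n z\le 1-z$ reduces to $1-t-T^n t\ge(\rho^{k-n}+\rho^k)\Delta$; this is where $\rho\le1/3$ is exploited, and this is the main obstacle, since it can fail when $T^n t$ is close to $1-t$ — precisely near the isolated points of $\Ga$, which I expect to treat separately using the characterisation in Theorem \ref{main:Ga-topology} and the plateau structure of $t\mapsto\dim_H K_t$. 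Finally (iii) is then immediate: $\psi(t)=\dim_H(\Ga\cap[t,1])$ equals $\dim_H S_\tau(t)=\dim_H K_t$ on $(0,1]$ and equals $\dim_H\Ga=s=\dim_H K_0$ at $t=0$, so $\psi$ is exactly the non‑increasing Devil's staircase of the second paragraph (with $\psi\equiv0$ beyond $\rho/(1+\rho)$, consistently).
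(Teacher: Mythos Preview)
Your setup, the identity $\dim_H S_\tau(t)=\dim_H K_t$, and your argument for part~(i) via $f_0^k(K_{\rho^k})\subseteq\Gamma\cap[0,\rho^k]$ are all correct (the paper uses a closely related but different family of subsets $\Lambda_N\subseteq\vs$ for~(i), so your route there is a minor variant). The upper bound in~(ii) and the deduction of~(iii) from~(ii) are also fine.

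The genuine gap is the lower bound $\dim_H(\Gamma\cap[t,1])\ge\dim_H K_{t'}$ in~(ii). Your prefix-gluing map $y\mapsto z$ with code $b_1\cdots b_k(\text{code of }y)$ does \emph{not} work with a naive choice of $k$, and the failure is not confined to neighbourhoods of isolated points as you suggest. The inequality $T^n z\le 1-z$ for $n<k$ amounts, lexicographically, to $b_{n+1}\cdots b_k\preceq\overline{b_1\cdots b_{k-n}}$ with a further constraint on the tail whenever equality holds. Equality $b_{n+1}\cdots b_k=\overline{b_1\cdots b_{k-n}}$ can occur for \emph{accumulation} points $t\in\Gamma$ (equivalently $q(t)\in\overline{\ub}$): for instance whenever $\al(q(t))$ begins with a long Thue--Morse--like block, which happens for every $q$ near $q_{KL}$, there is no single $k$ for which the strict inequality holds for all $n<k$. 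In that case the tail comparison forces constraints on the code of $y$ that can easily be violated by sequences in $K_{t'}$, so your map does not land in $\Gamma$. Your proposed remedy --- handling isolated points separately via plateaus --- therefore does not address the real obstruction.

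The paper's resolution (Lemma~\ref{lem:vs-q-dim}) is substantially more delicate than a single prefix. One first reduces to $t\in E_R$, i.e.\ $q:=q(t)\in\bb_L$; then, writing $\al(q)=(\al_i)$, one builds a compound prefix $B=B_0B_1\cdots B_{N-1}$ where $B_0=\al_1\cdots\al_m^-$ is an admissible prefix with $m$ large, and each subsequent $B_\nu=(\al_1\cdots\al_{i_\nu}^-)^{k_\nu}$ is a power of a shorter admissible prefix. The indices $i_\nu$ are obtained by iteratively locating the longest suffix of the current block that matches $\overline{\al_1\cdots\al_i}^+$, and the exponents $k_\nu$ come from the bifurcation structure (Remark~\ref{rem:1}). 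It is precisely this iterated-block construction that neutralises the equality cases you ran into; without it, the lower bound in~(ii) is not established.
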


It follows from Theorem \ref{main:Ga-dimension} (ii) that $\dim_H(\Ga\cap[0, t])=\dim_H\Ga$ for all $t>0$. In that sense, we could say that the dimension of $\Gamma$ is concentrated near $0$.

The graph of $\delta$ is shown in Figure \ref{fig:1}. It first becomes zero at $t=t_F:=1-\pi(\tau_1\tau_2\dots)$, where $(\tau_i)_{i=0}^\f$ is the Thue-Morse sequence (see Section \ref{sec:entropy}) and $\pi:\{0,1\}^\N\to C$ is the natural projection (see \eqref{eq:pi} below).

\begin{center}
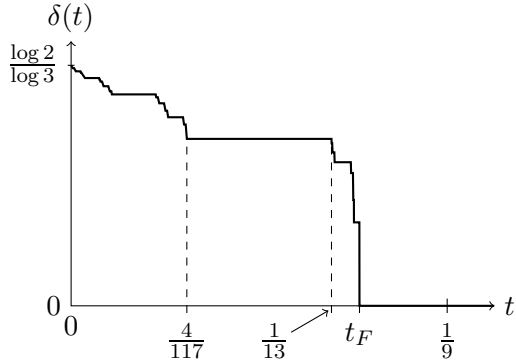
\begin{figure}[h!]
\begin{tikzpicture}[xscale=4.5,yscale=3.2]
\draw[<->] (0,1.1)--(0,0) node[anchor=east]{$0$} node[anchor=north]{$0$} -- (1.25,0) node[anchor=west]{$t$};
\draw (0.852,-0.03) node[anchor=north]{$t_F$} --(0.852,0.03);
\draw (1.111,-0.03) node[anchor=north]{$\frac19$} --(1.111,0.03);
\draw (-0.01,1) node[anchor=east]{$\frac{\log 2}{\log 3}$} -- (0.01,1);
\draw(0,1.3) node[anchor=north]{$\delta(t)$};
\draw[thick] (0,1)--(.0046,.988)--(.0091,.988)
--(.0137,.975)--(.0275,.975)
--(.0290,.969)--(.0305,.969)
--(.0408,.947)--(.0826,.947)
--(.0838,.940)--(.0854,.940)
--(.0868,.932)--(.0915,.932)
--(.0959,.913)--(.1099,.913)
--(.1143,.892)--(.1189,.892)
--(.1204,.879)--(.2500,.879)
--(.2514,.867)--(.2562,.867)
--(.2603,.842)--(.2747,.842)
--(.2788,.811)--(.2836,.811)
--(.2867,.784)--(.3306,.784)
--(.3336,.754)--(.3385,.754)
--(.3419,.694)--(.7692,.694)
--(.7696,.676)--(.7713,.676)
--(.7724,.638)--(.7777,.638)
--(.7785,.597)--(.8264,.597)
--(.8272,.552)--(.8326,.552)
--(.8336,.440)--(.8354,.440)
--(.8356,.347)--(.8516,.347)
--(.8519,0)--(1.25,0);
\draw[dashed] (40/117,-0.03) node[anchor=north]{$\tfrac{4}{117}$} --(40/117,.694);
\draw[dashed] (10/13,-0.03) -- (10/13,.694);
\draw[->] (.65,-.11)--(.76,-.02);
\draw(.6,-.145) node{$\tfrac{1}{13}$};
\end{tikzpicture}
\caption{The graph of $\delta: t\mapsto\dim_H(\Ga\cap[t,1])=\dim_H {S(t)}$ for $\rho=1/3$. The dimension becomes zero at $t_F\approx 0.08519$ (see Remark \ref{rem:devil-staircase-detail} and Example \ref{ex:longest-plateau} for more details).}
\label{fig:1}
\end{figure}
\end{center}

\begin{remark}
The equality in Theorem \ref{main:Ga-dimension} (ii) is an instance of the interplay between the ``parameter space" (in this case, $\Ga$) and the ``dynamical space" (in our case $S(t)$) which was first observed by Douady \cite{Douady-1995} in the context of dynamics of real quadratic polynomials. A similar result was proved by Tiozzo \cite{Tiozzo_2015}, who considers for $c\in\R$ the set of angles of external rays which ``land" on the real slice of the Mandelbrot set to the right of $c$ (parameter space) and the set of external angles which land on the real slice of the Julia set of the map $z\mapsto z^2+c$ (dynamical space), showing that these two sets have the same Hausdorff dimension. In fact it is possible to deduce Theorem \ref{main:Ga-dimension} from Tiozzo's main result; however, we choose a slightly different approach, which will aid us in also computing the dimension of the level sets $L(t)$.

More recently, Carminati and Tiozzo \cite{CT-2021} proved an analogous identity in the context of continued fractions. While all of these results are different, the proofs are based on very similar ideas.
\end{remark}

From the definition of $\tau$ it is clear that the level set $L(t)$ is dense in $C$ for any $t\in\Ga$. Our last main result {concerns} the Hausdorff dimension of $L(t)$. In order to state it, we need the \emph{bifurcation set} of the map $\delta$ from \eqref{eq:psi}, defined by
\begin{equation} \label{eq:bifurcation-t}
E:=\set{t: \delta(t+\ep)<\delta(t-\ep)\ \forall\, \ep>0}.
\end{equation}
We also let $\Ga_{iso}$ denote the set of isolated points of $\Ga$. 

\begin{theorem} \label{main:level-set}
For any $t\in\Ga$ we have
\begin{equation} \label{eq:level-set-and-local-dimension}
\dim_H L(t)=\lim_{\ep\to 0}\dim_H\big(\Ga\cap(t-\ep, t+\ep)\big).
\end{equation}
In other words, the Hausdorff dimension of $L(t)$ is equal to the local dimension of $\Ga$ at $t$, for every $t$.
Furthermore,
  \begin{enumerate}[{\rm(i)}]

  \item if $t\in\Ga_{iso}$, then
  $L(t)$ is countably infinite;

	\item the bifurcation set $E\subset \Ga$ is a Cantor set containing $0$, and $\dim_H E=\dim_H\Ga$;

  \item if $t\in E$, then
  \begin{equation} \label{eq:level-S}
  \dim_H L(t)=\dim_H (\Ga\cap[t, 1])=\dim_H S(t).
  \end{equation}
  \end{enumerate}
\end{theorem}

\begin{remark}\mbox{}
\begin{enumerate}
  [{\rm(i)}]
  \item Since $\Ga$ has zero Lebesgue measure, the local dimension $\lim_{\ep\to 0}\dim_H(\Gamma\cap(t-\ep, t+\ep))$ is a reasonable quantity to describe the distribution of the set $\Ga$. For other examples of the local dimension of Lebesgue null sets, see \cite{Kalle-Kong-Li-Lv-2016, Allaart-Kong-2018} and the references therein.

  \item We can actually compute the dimension of $L(t)$ for all $t\in\Ga$. However, this requires more notation and concepts, and will be postponed until {Remark \ref{rem:renormalized-level-set}}.

  \item In contrast with Theorem \ref{main:Ga-dimension}, Theorem \ref{main:level-set} does not, to the best of our knowledge, have an analogue in the theory of dynamics of real quadratic polynomials. In order to prove Theorem \ref{main:level-set}, we borrow a technique from the literature on unique expansions in non-integer bases {(see \cite{Allaart-Kong-2018})}.
\end{enumerate}
\end{remark}

Observe that the maps $\tau\mapsto (1-\rho-\tau)^{-s}$ and $\tau\mapsto (1-\rho+\rho\tau)^{-s}$ are both bi-Lipschitz on $\Ga$. Note also that the first map is increasing and the second map is decreasing. Thus, using Theorem \ref{th:feng-etal}
it is easy to translate the above results into analogous statements concerning the sets $D_*$ and $D^*$, and the level sets $L_*(y)$ and $L^*(z)$. We leave this as an exercise for the interested reader.

\medskip

The rest of this article is organized as follows. In Section \ref{sec:preliminaries} we develop notation and translate the problem to an equivalent, but easier to analyze, dyadic version involving the doubling map on the unit interval. In particular we introduce the map $\psi$ which conjugates the map $T$ with the doubling map $D$. We then state analogues of our main theorems in the dyadic setting.
In Section \ref{sec:Gamma} we use known results on the set $\Ga_{AC}$ of Allouche and Cosnard to prove the dyadic analogue of Theorem \ref{main:Ga-topology}.

In Section \ref{sec:entropy} we consider an open dynamical system involving the doubling map, whose survivor set $K(\theta)$ turns out to be closely related to the set $S(t)$. We review known properties of $K(\theta)$, including its Hausdorff dimension and the corresponding bifurcation set $\cR$. We then introduce the tuning maps $\Psi_I$ and show that Hausdorff dimension behaves nicely under tuning.

In Section \ref{sec:R-local-dimension} we state and prove an analogue of Theorem \ref{main:Ga-dimension} (ii) for the bifurcation set $\cR$, and use it to compute the local dimension of $\cR$. This last result appears to be new and may be of independent interest, since the set $\cR$ has occurred several times before in the dynamical systems literature and has a variety of interpretations (see, e.g. \cite{Bon-Car-Ste-Giu-2013,Tiozzo_2015}).

In Section \ref{sec:Gamma-dimension} we prove the dyadic analogue of Theorem \ref{main:Ga-dimension} by introducing a family of sets $\wB(t)$ as a bridge between the dyadic version $\wS(t)$ of $S(t)$ and the survivor set $K(\theta)$, and using the results from Sections \ref{sec:entropy} and \ref{sec:R-local-dimension}.

The longest and most technical section of the paper is Section \ref{sec:level-set}, where we prove the dyadic analogue of Theorem \ref{main:level-set} and give a more complete description of the level sets $\wL(t)$ of the dyadic analogue of $\tau(x)$, using the tuning map from Section \ref{sec:entropy}.

Finally, in Section \ref{sec:main-proofs} we quickly derive our main results (Theorems \ref{main:Ga-topology}, \ref{main:Ga-dimension} and \ref{main:level-set}) from their dyadic counterparts by using bi-H\"older properties of the map $\psi$ which conjugates $T$ with the doubling map $D$. We end the paper with a concrete example.

\section{Preliminaries} \label{sec:preliminaries}

We recall some terminology from symbolic dynamics (cf.~\cite{Lind_Marcus_1995}). Let $\Omega:=\set{0, 1}^\N$ be the set of all infinite sequences of zeros and ones. Then $(\Omega, \si)$ is a shift space, where $\si$ is the left shift defined by $\si((d_i))=(d_{i+1})$ for any $(d_i)\in\Omega$. By a \emph{word} we mean a finite string of zeros and ones. Denote by $\set{0,1}^*$ the set of all finite words including the empty word $\epsilon$. For two words $\mathbf c=c_1\ldots c_m$ and $\mathbf d=d_1\ldots d_n$ we write $\mathbf{cd}=c_1\ldots c_md_1\ldots d_n$ for their concatenation. In particular, for any $k\in\N$ we denote by $\mathbf c^k$ the $k$-fold concatenation of $\mathbf c$ with itself, and by $\mathbf c^\f$ the periodic sequence which is obtained by the infinite concatenation of $\mathbf c$ with itself. For a word $\mathbf c=c_1\ldots c_m$, we denote its \emph{length} by $|\mathbf c|=m$. If $c_m=0$ we write $\mathbf c^+:=c_1\ldots c_{m-1}1$; and if $c_m=1$, we write $\mathbf c^-:=c_1\ldots c_{m-1}0$. Furthermore, we denote by $\overline{\mathbf c}:=(1-c_1)\ldots (1-c_m)$ the \emph{reflection} of $\mathbf c$; and for an infinite sequence $(c_i)\in\Omega$ we denote its reflection by $\overline{(c_i)}:=(1-c_1)(1-c_2)\ldots$. Throughout the paper we will use the lexicographical order `$\prec, \lle, \succ$' or `$\lge$' between sequences and words. For example, for two sequences $(c_i), (d_i)\in\Omega$ we say $(c_i)\prec (d_i)$ if $c_1<d_1$, or there exists $n\in \N$ such that $c_1\ldots c_n=d_1\ldots d_n$ and $c_{n+1}<d_{n+1}$. Similarly, for two words $\mathbf c, \mathbf d\in\set{0,1}^*$ we write $\mathbf c\prec \mathbf d$ if $\mathbf c 0^\f\prec \mathbf d 0^\f$.

Note that for each $x\in C$ there exists a sequence $(d_i)=d_1d_2\ldots\in\Omega$, called the {\em coding} of $x$,  such that
\begin{equation} \label{eq:pi}
x=\pi((d_i)):=(1-\rho)\sum_{i=1}^\f d_i\rho^{i-1}.
\end{equation}
Since $\rho\in(0, 1/3]$, the coding map $\pi: \Omega \to C$ is bijective. Thus, each $x\in C$ has a unique coding.

Let $\ga$ be the metric on $\Omega$ defined by
\begin{equation} \label{eq:metric}
\ga((c_i), (d_i))=2^{-\inf\set{i\ge 1: c_i\ne d_i}}.
\end{equation}
Then the topology induced by $\ga$ coincides with the order topology on $\Omega$.
Equipped with this metric $\ga$ we can define the Hausdorff dimension $\dim_H$ for any subset of $\Omega$. The following lemma is immediate:

\begin{lemma} \label{lem:bi-Holder}
The projection map $\pi:\Omega \to C$ defined by \eqref{eq:pi} is bi-H\"older continuous with exponent $1/s(\rho)$, where $s(\rho)$ was defined in \eqref{eq:Cantor-set-dimension}. 
\end{lemma}

It is convenient to transfer the problem from the Cantor set $C$ to the unit interval $[0,1)$, or more precisely, the torus $\mathbb{T}:=\R/\Z$. Let $D:\TT\to\TT$ be the {\em doubling map}, defined by
\[
D(x):=2x \!\!\!\mod 1.
\]
The analogue of the function $\tau$ for $D$ is the map
\[
\tilde{\tau}(x):=\min\left\{\liminf_{n\to\infty}D^n(x),\liminf_{n\to\infty}D^n(1-x)\right\}, \qquad x\in [0,1).
\]
We now set
\[
\widetilde{\Gamma}:=\{\tilde{\tau}(x): x\in [0,1)\}.
\]
A first observation is that $\tGa$ does not contain any dyadic rational numbers other than $0$. This is because, if $D^n(x)$ is very close to such a dyadic rational, then $D^{n+m}(x)$ will be very close to $0$ or $1$ for some $m\geq 1$. Hence, every point $x\in\tGa$ has a unique binary expansion
\[
b(x)=b_1 b_2\dots\in\Omega\quad\textrm{such that}\quad x=\sum_{i=1}^\infty 2^{-i}b_i.
\]
{Let $\TT':=\TT\backslash\{k2^{-n}: k\in\N, n\in\N\}$ (here we adopt the convention $\N=\{1,2,3,\dots\}$)}, so $\tGa\subseteq \TT'$. We let $\pi_2:\Omega\to[0,1]$ denote the projection map
\[
\pi_2((d_i))=\sum_{i=1}^\infty \frac{d_i}{2^i}.
\]
Note that $\pi_2\circ b(x)=x$ for all $x\in\TT'$, and $\pi_2\circ \si=D\circ\pi_2$.
Define the map
\[
\psi: C\to [0,1]; \qquad \psi(x):=\pi_2\circ \pi^{-1}(x).
\]
Thus $\psi(x)$ is the unique number in $[0,1]$ whose binary expansion equals the coding of $x$. (For definiteness, we set $\psi(1):=1$.)
Let $C':=\psi^{-1}(\TT')$; then $C'$ is the Cantor set $C$ with all endpoints of ``basic intervals" (except 0) removed. Note that $\psi$ induces an increasing homeomorphism between $C'$ and $\TT'$.
Observe now that $D^n\circ\psi(x)=\psi\circ T^n(x)$ for all $n\geq 0$ and $x\in C'$, and since $\psi$ is increasing, it follows that
\begin{equation} \label{tau-and-tau-tilde}
\tau(x)=\psi^{-1}\circ\tilde{\tau}\circ\psi(x), \qquad x\in C'.
\end{equation}
Thus, $\tGa=\psi(\Ga)$ (see Figure \ref{figure:2}).

We can now also define the analogues of the sets $S(t)$ and $L(t)$, namely
\begin{equation*} 
\wS(t):=\{x\in[0,1): \tilde{\tau}(x)\geq t\}, \qquad \wL(t):=\{x\in[0,1): \tilde{\tau}(x)=t\}.
\end{equation*}
Then for all $t\in C'\backslash\{0\}$ we have by \eqref{tau-and-tau-tilde},
\begin{equation} \label{eq:psi-correspondence}
S(t)=\psi^{-1}\big(\wS(\psi(t))\big), \qquad L(t)=\psi^{-1}\big(\wL(\psi(t))\big).
\end{equation}
(The first equality does not hold for $t=0$, but evidently $S(0)=C$.) We will show in Section \ref{sec:main-proofs} that the function $\psi$, suitably restricted, is bi-H\"older continuous with exponent $s(\rho)$. This allows us to deduce dimensional results for $S(t), L(t)$ and $\Ga$ from the corresponding statements about $\wS(t), \wL(t)$ and $\tGa$; see Section \ref{sec:main-proofs}.

\begin{figure}
  \centering
\usetikzlibrary{matrix}

\begin{tikzpicture}[scale=5cm]
  \matrix (m) [matrix of math nodes,row sep=4.5em,column sep=5em,minimum width=4em]
  {
    C' & C' \\
     \TT' & \TT' \\};
  \path[-stealth]
    (m-1-1) edge node [left] {$\psi$} (m-2-1)
            edge   node [above] {$T$} (m-1-2)
    (m-2-1.east|-m-2-2) edge  node [above] {$D$} (m-2-2)
    (m-1-2) edge node [right] {$\psi$} (m-2-2);
\end{tikzpicture}
\qquad
\begin{tikzpicture}[scale=5cm]
  \matrix (m) [matrix of math nodes,row sep=4em,column sep=1em,minimum width=4em]
  {
    \Gamma & S(t) & L(t) \\
     \tilde\Gamma & \wS(\psi(t))&\wL(\psi(t))\\};
  \path[-stealth]
    (m-1-1) edge node [right] {$\psi$} (m-2-1)
    (m-1-2) edge node [right] {$\psi$} (m-2-2)
    (m-1-3) edge node [right] {$\psi$} (m-2-3);
\end{tikzpicture}
\caption{The isomorphism $\psi$ between the two dynamical systems $(\TT', D)$ and $(C', T)$; and the correspondence between the sets $\tilde\Gamma, \tilde S(\psi(t)), \tilde L(\psi(t))$ in $\TT'$ and the sets $\Gamma, S(t), L(t)$ in $C'$.}\label{figure:2}
\end{figure}
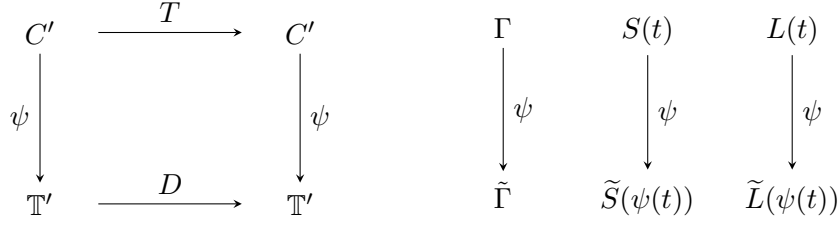

We now state analogs of our main results for the doubling map.

\begin{theorem} \label{main2:Ga-topology}
The density spectrum $\tGa$ can be represented as
\begin{equation} \label{eq:Gamma-representation2}
\tGa=\set{t\in [0,1]: t\le D^n(t)\le 1-t~\forall n\ge 0}.
\end{equation}
Therefore, $\tGa$ is closed, and it contains both infinitely many isolated and infinitely many accumulation points. Furthermore,
  \begin{enumerate}[{\rm(i)}]
  \item $t$ is isolated in $\tGa$ if and only if $D^n (t)=1-t$ for some $n\in\N$;
  \item the smallest element $0$ of $\tGa$ is an accumulation point, while
  the largest element $1/3$ of $\tGa$ is an isolated point.
\end{enumerate}
\end{theorem}

\begin{theorem} \label{main2:Ga-dimension}
\begin{enumerate}[{\rm(i)}]
\item The set $\tGa$ has full Hausdorff dimension: $\dim_H \tGa=1$.
\item
 For any $t\in(0, 1]$ we have
 \[
  \dim_H(\tGa\cap[t, 1])=\dim_H \wS(t)<1.
 \]
\item The map
	\begin{equation} \label{eq:psi2}
	\tilde{\delta}: t\mapsto\dim_H(\tGa\cap[t, 1])
	\end{equation}
	is a non-increasing devil's staircase on $[0, 1]$, i.e., {$\tilde{\delta}$} is non-increasing, continuous and locally constant almost everywhere in $[0, 1]$.
\end{enumerate}
\end{theorem}

For the last theorem, we define the ``dyadic" analog of the bifurcation set $E$ by
\begin{equation} \label{eq:bifurcation-dyadic}
\wE:=\set{t: \tilde{\delta}(t+\ep)<\tilde{\delta}(t-\ep)\ \forall\, \ep>0}.
\end{equation}

\begin{theorem} \label{main2:level-set}
For any $t\in\tGa$ we have
\begin{equation} \label{eq:level-set-and-local-dimension-2}
\dim_H \wL(t)=\lim_{\ep\to 0}\dim_H\big(\tGa\cap(t-\ep, t+\ep)\big).
\end{equation}
Furthermore,
  \begin{enumerate}[{\rm(i)}]

  \item if $t$ is an isolated point of $\tGa$, then $\wL(t)$ is countably infinite;

	\item the bifurcation set $\wE\subset \tGa$ is a Cantor set containing $0$, and $\dim_H \wE=1$;

  \item if $t\in \widetilde{E}$, then $\dim_H \wL(t)=\dim_H \wS(t)$.
  \end{enumerate}
\end{theorem}

The following notation and lemma are important for relating the Hausdorff dimensions of various sets.
For $k\in\N$, let $\mathbf{X}_k$ be the set of sequences in $\Omega$ that do not contain the word $10^k$ or $01^k$.

\begin{lemma} \label{lem:bi-Lip}
For each $k\in\N$, the restriction $\pi_2|_{\mathbf{X}_k}$ is bi-Lipschitz continuous with respect to the metric $\gamma$ from \eqref{eq:metric}.
\end{lemma}

\begin{proof}
Clearly $\pi_2$ is Lipschitz on $\Omega$, so we only need to verify that the inverse of $\pi_2|_{\mathbf{X}_k}$ satisfies a Lipschitz condition. Take $(c_i),(d_i)\in \mathbf{X}_k$, and suppose $\gamma((c_i),(d_i))=2^{-m}$, so $c_i=d_i$ for $1\leq i<m$, and $c_m\neq d_m$. Without loss of generality assume $c_m=0$ and $d_m=1$. Let $n_0:=\min\{n>m:{c_n}=0\}$. Then $n_0\leq m+k$.
 This implies
\[
\pi_2((d_i))-\pi_2((c_i))\geq 2^{-n_0}\geq 2^{-(m+k)}=2^{-k}\gamma((c_i),(d_i)),
\]
giving the desired result.
\end{proof}

At various places in the paper, we use the notion of admissible word, defined below.

\begin{definition} \label{def:admissible}
A word $\sa=a_1\dots a_m\in\{0,1\}^m$ with $m\geq 2$ is {\em admissible} if
\begin{equation} \label{eq:admissible-definition}
\overline{a_1\ldots a_{m-i}}\lle a_{i+1}\ldots a_m\prec a_1\ldots a_{m-i}\quad\forall\, 1\le i<m.
\end{equation}
\end{definition}

We observe that an admissible word must necessarily begin with a 1 and end with a 0.
Admissible words were introduced by Komornik and Loreti \cite{Komornik_Loreti_2007}, who used them to characterize the unique beta expansions of $1$. But they are also useful for a symbolic description of the set $\Ga_{AC}$.

The following elementary lemma will be needed in Section \ref{sec:level-set}.

\begin{lemma} \label{lem:admissible-word-property}
Let $\sa=a_1\dots a_m$ be an admissible word which is not of the form $\sb\overline{\sb}$. Then
\[
\overline{a_{i+1}\dots a_m a_1\dots a_i}\prec \sa \qquad \forall\,0\leq i<m.
\]
\end{lemma}

\begin{proof}
Two applications of \eqref{eq:admissible-definition} (the second with $m-i$ in place of $i$) yield $\overline{a_{i+1}\dots a_m}\lle a_1\dots a_{m-i}$ and $\overline{a_1\dots a_i}\lle a_{m-i+1}\dots a_m$, and hence $\overline{a_{i+1}\dots a_m a_1\dots a_i}\lle \sa$. Suppose we have equality:
\begin{equation} \label{eq:cut-and-reflect}
\overline{a_{i+1}\dots a_m a_1\dots a_i}=a_1\dots a_m.
\end{equation}
If $i<m-i$, then applying \eqref{eq:cut-and-reflect} twice gives $\overline{a_{2i+1}\dots a_m}=a_{i+1}\dots a_{m-i}=\overline{a_1\dots a_{m-2i}}$, so that $a_{2i+1}\dots a_m=a_1\dots a_{m-2i}$, contradicting \eqref{eq:admissible-definition}. Hence $i\geq m-i$. On the other hand, by swapping out the first $m-i$ digits and the last $i$ digits on both sides of \eqref{eq:cut-and-reflect} and taking reflections we obtain
\[
\overline{a_{m-i+1}\dots a_m a_1\dots a_{m-i}}=a_1\dots a_m,
\]
i.e. \eqref{eq:cut-and-reflect} with $m-i$ in place of $i$, and so $m-i\geq i$ as well. Thus, $i=m-i$. But this means $m=2i$ and we have $a_{i+1}\dots a_m=\overline{a_1\dots a_i}$, so $\sa=\sb\overline{\sb}$ for some word $\sb$, contrary to the hypothesis of the lemma.
\end{proof}

\section{Proof of Theorem \ref{main2:Ga-topology}} \label{sec:Gamma}

Recall the definition of $\Ga_{AC}$ from \eqref{eq:Gamma-AC-def}.
The set $\Ga_{AC}$ was introduced by Allouche and Cosnard in 1983 \cite{Allouche_Cosnard_1983}, and its importance has since been demonstrated in a variety of settings. It is known to be of Lebesgue measure zero but of full Hausdorff dimension (see \cite{Bon-Car-Ste-Giu-2013}). The following lemma collects some known topological properties of $\Ga_{AC}$.

\begin{lemma} \label{lem:isolated-point-of-V}
The set $\Ga_{AC}$ is compact, and contains infinitely many isolated and infinitely many accumulation points. Furthermore,
\begin{enumerate}[{\rm(i)}]
  \item $2/3=\min \Ga_{AC}$ is an isolated point of $\Ga_{AC}$, and $1=\max \Ga_{AC}$ is an accumulation point of $\Ga_{AC}$;
	\item A point $t\in\Ga_{AC}$ is isolated in $\Ga_{AC}$ if and only if $D^n(t)=1-t$ for some $n\in\N$;
\end{enumerate}
\end{lemma}

\begin{proof}
These statements follow directly from \cite{Allouche-Cosnard-2001}, where it is shown that the isolated points of $\Ga_{AC}$ are obtained by ``period doubling"; i.e. $t\in\Ga_{AC}$ is an isolated point of $\Ga_{AC}$ if and only if $b(t)=(\sb\overline{\sb})^\f$ for a suitable word $\sb$ (see \cite{Allouche-Cosnard-2001}). Thus, $t$ is isolated in $\Ga_{AC}$ if and only if $D^n(t)=1-t$ for some $n$. In particular, taking $\sb=1$ we obtain the isolated point $2/3$, since $b(2/3)=(10)^\f$.
\end{proof}

\begin{corollary} \label{cor:isolated-point}
If $t$ is an isolated point of $\Ga_{AC}$, then $\tilde{\tau}(t)=1-t$, and hence $1-t\in\tGa$.
\end{corollary}

\begin{proof}
Let $t$ be an isolated point of $\Ga_{AC}$. Then by Lemma \ref{lem:isolated-point-of-V} (ii) there is an $n\in\N$ such that $D^n(t)=1-t$, so that $D^{2n}(t)=D^n(1-t)=t$. But then $D^{n(2k-1)}(t)=1-t$ for all $k\in\N$, so $\tilde{\tau}(t)\leq\liminf_{j\to\infty}D^j(t)\leq 1-t$. On the other hand, since $t\in \Ga_{AC}$, we have $\min\set{D^j(t), D^j(1-t)}\ge 1-t$ for all $j\ge 0$, and so $\tilde{\tau}(t)\geq 1-t$. Hence, $\tilde{\tau}(t)=1-t$.
\end{proof}

\begin{proposition}  \label{prop:char-jump}
  $\tGa=1-\Ga_{AC}$.
\end{proposition}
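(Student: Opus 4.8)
\textbf{Proof proposal for Proposition \ref{prop:char-jump} ($\Ga=\Delta$).}

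The plan is to show both inclusions by translating the defining conditions of $\Ga$ and $\Delta$ into statements about orbits under $T$, and then exploiting the identity $\pi\circ\si=T\circ\pi$ together with the monotonicity of $\pi$. Recall that $\Ga=\{\tau(x):x\in C\}$ with $\tau(x)=\min\{\liminf_n T^n(x),\liminf_n T^n(1-x)\}$, while $\Delta=\{t\in C:\de(t)\lle\si^n(\de(t))\lle\overline{\de(t)}\ \forall n\ge 0\}=\{t\in C: t\le T^n(t)\le 1-t\ \forall n\ge 0\}$, the last equality because $\de(t)=\pi^{-1}(t)$ for $t\in C$ and $\pi$ is an increasing bijection intertwining $\si$ and $T$ (so $\si^n(\de(t))\lge\de(t)$ iff $T^n(t)\ge t$, and $\si^n(\de(t))\lle\overline{\de(t)}$ iff $T^n(t)\le 1-t$, using $\pi(\overline{\de(t)})=1-t$). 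Note that the representation \eqref{eq:Gamma-representation} claimed in Theorem \ref{main:Ga-topology} is exactly the statement $\Ga=\Delta$, so this proposition is the heart of that theorem.

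\emph{The inclusion $\Delta\subseteq\Ga$.} First I would dispose of the isolated points: if $t\in\Delta$ is isolated in $\Delta$, then Lemma \ref{lem:isolated-point-of-V}(ii) already gives $\tau(t)=t$, hence $t\in\Ga$. For a general $t\in\Delta$, the idea is to construct a point $x\in C$ with $\tau(x)=t$ by appending longer and longer blocks. Since $t\in\Delta$ means the orbit of $t$ (and of $1-t$) under $T$ stays in $[t,1-t]$, but $\liminf$ of that orbit could be strictly larger than $t$, I need to force the $\liminf$ down to exactly $t$. The natural construction: take $x$ whose coding is a concatenation $\de(t)|_{n_1}\,\mathbf{w}_1\,\de(t)|_{n_2}\,\mathbf{w}_2\cdots$ of ever-longer prefixes of $\de(t)$ interleaved with ``filler'' words $\mathbf{w}_k$ chosen so that every shifted tail still lies lexicographically between $\de(t)$ and $\overline{\de(t)}$ (this is where admissibility-type combinatorics from Section \ref{sec:preliminaries-unique expansion} enters, and is the analogue of the standard construction showing $\liminf T^n = t$ is achievable). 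Along the prefix blocks $T^{m_k}(x)\to t$, giving $\liminf_n T^n(x)\le t$; the block constraint gives $T^n(x)\ge t$ and $T^n(1-x)\ge t$ for all $n$, hence $\tau(x)\ge t$; together $\tau(x)=t$.

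\emph{The inclusion $\Ga\subseteq\Delta$.} Let $t=\tau(x)$ for some $x\in C$. First, $t\in C$ because both $\liminf_n T^n(x)$ and $\liminf_n T^n(1-x)$ lie in $C$ (the orbit closure of any point of $C$ under $T$ is contained in $C$), and $t$ is the min of two elements of $C$, hence in $C$. It remains to show $t\le T^n(t)\le 1-t$ for all $n\ge 0$, equivalently $T^n(t)\in[t,1-t]$. Write $u:=\liminf_n T^n(x)$ and $v:=\liminf_n T^n(1-x)$, so $t=\min\{u,v\}$; WLOG $t=u$. Since $u$ is a $\liminf$, there is a subsequence $T^{m_k}(x)\to u=t$; by compactness and continuity of $T$ on its domain, $T^j(T^{m_k}(x))\to T^j(t)$ for each fixed $j$, so $T^j(t)$ is a subsequential limit of $\{T^n(x)\}$, whence $T^j(t)\ge\liminf_n T^n(x)=u=t$. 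This gives the lower bound $T^n(t)\ge t$. For the upper bound $T^n(t)\le 1-t$, equivalently $1-T^n(t)\ge t$: one checks that $1-T^j(t)$ arises as a subsequential limit of $\{T^n(1-x)\}$ — indeed the reflection relation $\pi(\overline{(d_i)})=1-\pi((d_i))$ combined with $\pi\circ\si=T\circ\pi$ shows that if $y$ is a subsequential limit of $\{T^n(x)\}$ realized along coding tails, then $1-y$ (suitably the reflected tail) is a subsequential limit of $\{T^n(1-x)\}$, modulo the usual care at dyadic-type endpoints where a tail $0^\infty$ or $1^\infty$ occurs; those exceptional cases are handled separately and never lower the relevant $\liminf$. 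Hence $1-T^j(t)\ge\liminf_n T^n(1-x)=v\ge t$, giving $T^n(t)\le 1-t$. Therefore $t\in\Delta$.

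\emph{Main obstacle.} The delicate step is the forward construction in $\Delta\subseteq\Ga$: producing a genuine point $x\in C$ whose orbit simultaneously has $\liminf$ equal to $t$ (not merely $\le$ something) \emph{and} respects the two-sided lexicographic block constraint for \emph{all} tails, including the reflected orbit of $1-x$. One must verify that inserting the prefix blocks $\de(t)|_{n_k}$ does not create a tail that drops below $\de(t)$ or rises above $\overline{\de(t)}$ at the seams; this uses that $t\in\Delta$ so the prefixes themselves are ``internally admissible'', plus a careful choice of where to cut (e.g.\ cutting right before a position witnessing $\si^n(\de(t))\succ\de(t)$ strictly, which exists unless $t$ is isolated — already handled). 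The reflection symmetry of the constraint set $\S_\tau(t)$ keeps the bookkeeping for $1-x$ parallel to that for $x$. Everything else is routine application of the intertwining $\pi\circ\si=T\circ\pi$, monotonicity of $\pi$, and continuity/compactness.
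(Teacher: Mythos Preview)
Your overall strategy matches the paper's: split $\Delta\subseteq\Ga$ into the isolated case (Lemma~\ref{lem:isolated-point-of-V}(ii)) and the accumulation case, and prove $\Ga\subseteq\Delta$ via continuity of $T^k$ along a subsequence realizing the $\liminf$. Your argument for $\Ga\subseteq\Delta$ is essentially the paper's (stated directly rather than by contradiction); the ``dyadic-type endpoints'' caveat is unnecessary, since $T(1-w)=1-T(w)$ holds exactly for every $w\in C$, so $T^{m_k}(1-x)=1-T^{m_k}(x)\to 1-t$ and hence $1-T^j(t)$ is a genuine subsequential limit of $(T^n(1-x))$.

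The soft spot is the construction for the accumulation case of $\Delta\subseteq\Ga$. Your proposed recipe---concatenate prefixes $\de(t)|_{n_k}$ with filler, cutting ``right before a position witnessing $\si^n(\de(t))\succ\de(t)$ strictly''---is not quite the condition that makes the seam inequalities go through. What is actually needed is that each block be an \emph{admissible} word in the sense of Definition~\ref{def:admissible}, which is a two-sided condition on \emph{all} subwords of the block, not just a condition at the cut position. The paper supplies exactly this via Lemma~\ref{lem:admissible-prefixes}: since $t$ is an accumulation point of $\Delta$, the corresponding base $q(t)$ lies in $\overline{\ub}$, so there exist infinitely many $m_j$ with $\al_1\ldots\al_{m_j}^-$ admissible; one then sets $(d_i):=\overline{\al_1\ldots\al_{m_1}^-\,\al_1\ldots\al_{m_2}^-\,\cdots}$ with \emph{no} filler. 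Admissibility (specifically \eqref{eq:admissible-definition}) is precisely what verifies $\de(t)\lle\si^n((d_i))\lle\overline{\de(t)}$ across the seams, and the growing block lengths force $\liminf T^n(x)\le t$. So the missing ingredient in your sketch is the appeal to Lemma~\ref{lem:admissible-prefixes} (and the passage through $q(t)\in\overline{\ub}$) to locate the correct cut points; once you invoke that, the filler words become superfluous and the verification is the short computation in \eqref{eq:august-6-1}--\eqref{eq:august-6-2}.
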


\begin{proof}
First take $t\in\tGa$. We need to show that
	\begin{equation} \label{eq:self-dominating}
	t\leq D^n(t)\leq 1-t \quad\forall n\geq 0.
	\end{equation}
	Fix $n\geq 0$. Since $t\in\tGa$, there exists $x\in \TT'=\TT\setminus\set{k 2^{-n}: k, n\in\N}$ such that $\tilde{\tau}(x)=t$. By symmetry of $\tilde{\tau}$ (i.e., $\tilde{\tau}(x)=\tilde{\tau}(1-x)$) we may assume $\tilde{\tau}(x)=\liminf_{k\to\f}D^k(x)=t$, as otherwise we can replace $x$ by $1-x$. So there is a subsequence $(n_i)$ such that $D^{n_i}(x)\to t$ as $i\to\f$.
  Using the continuity of $D^n$ on $\TT'$ it follows that
  \[
	t=\tilde{\tau}(x)\leq\liminf_{i\to\infty} D^{n+n_i}(x)=D^n(t),
  \]
	and similarly,
	\[
	t\leq \liminf_{i\to\infty} D^{n+n_i}(1-x)=1-\lim_{i\to\infty}D^{n+n_i}(x)=1-D^n(t).
	\]
	This proves \eqref{eq:self-dominating}, and it follows that $t\in 1-\Ga_{AC}$.

Next, take $t\in 1-\Ga_{AC}$. If $1-t$ is an isolated point of $\Ga_{AC}$, then $t\in\tGa$ by Corollary \ref{cor:isolated-point}. Suppose now that $1-t$ is an accumulation point of $\Ga_{AC}$. Then by Lemma \ref{lem:isolated-point-of-V} (ii), we have
\[
b(t)\prec \sigma^n(\overline{b(t)})\lle \overline{b(t)} \qquad\forall n\geq 0,
\]
so setting $\al:=\al_1\al_2\ldots:=\overline{b(t)}$, $\al$ satisfies the hypotheses of \cite[Lemma 4.1]{Komornik_Loreti_2007}, and hence there is a strictly increasing sequence $(m_j)$ such that for each $j$, the word $\al_1\dots\al_{m_j}^-$ is admissible (see Definition \ref{def:admissible}).
Now set
\[
(d_i):=\al_1\dots\al_{m_1}^- \al_1\dots\al_{m_2}^- \al_1\dots\al_{m_3}^-\dots,\qquad\textrm{and}\qquad x:=\pi_2((d_i)).
\]
Clearly,
\[
\limsup_{k\to\f}D^{m_1+m_2+\ldots+m_k}(x)\ge\lim_{k\to\f}\pi_2\big(\al_1\ldots \al_{m_{k+1}}^-0^\f\big)=\pi_2\big(\overline{b(t)}\big)=1-t,
\]
which implies $\tilde{\tau}(x)\le\liminf_{n\to\f}D^n (1-x)=1-\limsup_{n\to\f}D^n(x)\leq t$.  To prove the reverse inequality we claim that
\begin{equation}\label{eq:august-6-1}
b(t)\preceq \sigma^n((d_i))\preceq \overline{b(t)} \qquad\forall n\geq 0.
\end{equation}
Since $\al_1\ldots \al_{m_j}^-$ is admissible for each $j$, by Definition \ref{def:admissible} (applied first to $i$ and then to $m_j-i$ in place of $i$) it follows that
\begin{equation}\label{eq:august-6-2}
 \overline{\al_1\ldots \al_{m_j}}\prec \al_{i+1}\ldots \al_{m_j}^-\al_1\ldots \al_i\prec \al_1\ldots\al_{m_j}\quad\forall~ 0\le i<m_j.
\end{equation}
Since $m_{j+1}>m_j$, (\ref{eq:august-6-2}) implies (\ref{eq:august-6-1}), and this gives $\tilde{\tau}(x)\ge t$. As a result, $\tilde{\tau}(x)=t$ and therefore, $t\in\tGa$. 
\end{proof}

\begin{remark} \label{rem:char-Gamma}
Proposition \ref{prop:char-jump} and Lemma \ref{lem:isolated-point-of-V} imply that $\tGa$ is compact. As a consequence we can show that $\tGa$ is in fact the right bifurcation set of the set-valued map $t\mapsto \widetilde{S}(t)$:
\[
t\in\tGa\quad\Longleftrightarrow\quad \widetilde{S}(t')\ne \widetilde{S}(t)\quad\forall t'>t.
\]
This can be seen as follows. If $t\in\tGa$, then $\tilde\tau(x)=t$ for some $x\in [0,1)$, so $x\in \widetilde{S}(t)\setminus \widetilde{S}(t')$ for any $t'>t$. Conversely, take $t\notin \tGa$. Since $\tGa$ is compact, there exists $\ep>0$ such that $\tGa\cap[t, t+\ep)=\emptyset$. This implies that $\widetilde{S}(t)=\widetilde{S}(t')$ for any $t'\in(t, t+\ep)$.
\end{remark}

\begin{proof}[Proof of Theorem \ref{main2:Ga-topology}]
Immediate from Proposition \ref{prop:char-jump} and Lemma \ref{lem:isolated-point-of-V} (ii).
\end{proof}

\section{Entropy, survivor sets and bifurcation set for the doubling map} \label{sec:entropy}

For $0\leq\theta\leq 1/2$, let
\begin{equation} \label{eq:K-theta}
K(\theta):=\{x\in\mathbb{T}: D^n(x)\not\in(\theta,1-\theta)\ \forall n\geq 0\}.
\end{equation}
{The set $K(\theta)$ is known as the {\em survivor set} of the open dynamical system $(\mathbb{T},D)$ with the ``hole" $(\theta,1-\theta)$. That is, $K(\theta)$ is the set of points whose forward orbit under $D$ never enters the hole. Such open dynamical systems were first considered by Urba\'nski \cite{Urbanski-1986} in the setting of the unit circle, and there is an extensive literature about them.}

Note that $K(\theta)$ is forward invariant under the doubling map $D$, and the Lebesgue measure $m|_{\mathbb T}$ is the unique ergodic measure for the dynamical system $(\mathbb T, D)$ (cf.~\cite{Walters_1982}). Thus, it follows from the Birkhoff ergodic theorem that $K(\theta)$ has zero Lebesgue measure for any $\theta<1/2$. So it is natural to consider the Hausdorff dimension of $K(\theta)$ for $\theta\in[0,1/2)$. It is well-known (cf.~\cite{Raith_1989} or \cite{Tiozzo_2018}) that
\[
\dim_H K(\theta)=\frac{h(\theta)}{\log 2}\quad\forall\, 0\le\theta\le 1/2,
\]
where
\[
h(\theta):=h\big(D|_{K(\theta)}\big)
\]
is the topological entropy of $D$ on $K(\theta)$. See, for instance, \cite{Douady-1995} for the definition of topological entropy. The function $h(\theta)$ was studied in detail by Douady \cite{Douady-1995}, who used it to establish properties of the entropy of real quadratic polynomials. We collect here some known facts about the size of $K(\theta)$ and the entropy $h(\theta)$. First, recall from \cite{Allouche_Shallit_1999} the Thue-Morse sequence $(\tau_i)_{i=0}^\f\in\Omega$, defined recursively by
\begin{equation}\label{eq:thue-morse}
\tau_0=0,\quad\textrm{and}\quad\tau_{2^n}\ldots \tau_{2^{n+1}-1}=\overline{\tau_0\ldots\tau_{2^n-1}}\quad\forall ~n\ge 0.
\end{equation}
Then $(\tau_i)_{i=0}^\f=01101001\ldots$. The {\em Feigenbaum angle} is the number $\theta_F:=\pi_2(\tau_0\tau_1\ldots)\approx .412454\dots$.

\begin{theorem}[\cite{Douady-1995,Tiozzo_2018}] \label{thm:K-theta}
\mbox{}

\begin{enumerate}[{\rm (i)}]
\item $K(\theta)=\{0\}$ for $0\leq\theta<1/3$;
\item $K(\theta)$ is countably infinite for $1/3\leq\theta<\theta_F$;
\item $K(\theta)$ is uncountable but of zero Hausdorff dimension for $\theta=\theta_F$;
\item $K(\theta)$ is uncountable for $\theta_F<\theta\leq 1/2$, and furthermore,
\[
\dim_H K(\theta)=\frac{h(\theta)}{\log 2}>0 \qquad\mbox{for all $\theta>\theta_F$},
\]
and the function $\theta\mapsto h(\theta)$ is a devil's staircase: continuous, nondecreasing and locally constant almost everywhere.
\end{enumerate}
\end{theorem}

In Section \ref{sec:Gamma-dimension} we shall relate the sets $\widetilde{S}(t), t\in[0,1]$ to the sets $K(\theta), 0\leq\theta\leq 1/2$ via the reparametrization $t=1-2\theta$, so that the behavior of the Hausdorff dimension of $\widetilde{S}(t)$ may be deduced from Theorem \ref{thm:K-theta}.

Let $E^h$ denote the bifurcation set of the map $\theta\mapsto h(\theta)$:
\[
E^h:=\{\theta\in[0,1/2]: h(\theta-\ep)<h(\theta+\ep)\ \forall \ep>0\}.
\]
It follows from Theorem \ref{thm:K-theta} (iv) that $E^h$ has Lebesgue measure zero. We will show later, in Proposition \ref{prop:dimension-of-E}, that it has full Hausdorff dimension.

We also define a one-sided version of $E^h$, namely
\[
E^h_L:=\{\theta\in[0,1/2]: h(\theta-\ep)<h(\theta)\ \forall \ep>0\}.
\]
The maximal intervals of constancy (the ``entropy plateaus") of $h(\theta)$ were first characterized by Douady \cite[p.~86]{Douady-1995}, and were recently described more explicitly by Tiozzo \cite{Tiozzo_2015,Tiozzo_2018}. It will be convenient for us here to {use the admissible words from Definition \ref{def:admissible}} to describe these plateaus.

To each admissible word $\sa=a_1\dots a_m$ which is not of the form $\sb\overline{\sb}$ we associate a word $\ss=s_0\dots s_{m-1}:=0 a_1\dots a_{m-1}$ of the same length. To this word $\ss$, which we call a {\em tuning base}, we associate the closed interval $[\theta_L,\theta_R]:=[\theta_L(\ss),\theta_R(\ss)]\subseteq [0,1)$ such that $\theta_L=.\ss^\f$ and $\theta_R=.\ss \overline{\ss}^\f$. Some of these intervals are contained within others; however we have no need in this paper to distinguish which of them are maximal. What matters is that any two such intervals are either disjoint, or else one is contained in the other. The intervals $[\theta_L,\theta_R]$ are called {\em tuning windows}; see \cite[end of section 1]{Tiozzo_2018}. Now
\begin{equation} \label{eq:Eh}
E^h=[\theta_F,1/2]\backslash \bigcup_{\ss} (\theta_L(\ss),\theta_R(\ss)),
\end{equation}
where the union is over all tuning bases $\ss$. It follows in particular that $E^h$ is a Cantor set. Similarly,
\begin{equation} \label{eq:EhL}
E^h_L=(\theta_F,1/2]\backslash \bigcup_{\ss} (\theta_L(\ss),\theta_R(\ss)].
\end{equation}

Next, considering that the set valued map $\theta\mapsto K(\theta)$ is non-decreasing, we introduce the bifurcation set $\cR$ of the set-valued map $\theta\mapsto K(\theta)$. For definiteness, we set $K(\theta)=\emptyset$ for $\theta<0$, and $K(\theta)=[0,1)$ for $\theta\geq 1/2$. Let
\begin{equation} \label{eq:definition-of-R}
\cR:=\{\theta\in[0,1/2]: K(\theta-\ep)\neq K(\theta+\ep)\ \forall \ep>0\}.
\end{equation}
Clearly, $E^h\subseteq \cR$.

\begin{lemma} \label{lem:R}
We have
\[
\cR=\{\theta\in[0,1/2]: D^n(\theta)\not\in(\theta,1-\theta)\ \forall n\geq 0\}=\{\theta\in[0,1/2]: \theta\in K(\theta)\}.
\]
\end{lemma}

\begin{proof}
Note first that $K(0)=\{0\}$ and $K(-\ep)=\emptyset$ for all $\ep>0$, so $0\in\cR$ and $0\in K(0)$. In the following, we fix $\theta\in(0,1/2]$.

If $\theta\in K(\theta)$, then for each $\theta'<\theta$ we have $\theta\in K(\theta)\backslash K(\theta')$, so $\theta\in\cR$.

Conversely, suppose $\theta\not\in K(\theta)$. Then there is $n_0\in\N$ such that $D^{n_0}(\theta)\in(\theta,1-\theta)$. Since $D^{n_0}$ is continuous and $D^{n_0}(\theta)\neq 0$, there is an $\ep>0$ such that $D^{n_0}$ is strictly increasing on $(\theta-\ep,\theta+\ep)$, and $\theta+\ep<D^{n_0}(\theta-\ep)<D^{n_0}(\theta+\ep)<1-\theta-\ep$. Hence,
\begin{equation} \label{eq:in-forbidden-interval}
D^{n_0}(x)\in (\theta+\ep,1-\theta-\ep) \qquad\forall x\in [\theta-\ep,\theta+\ep].
\end{equation}
Now suppose $x\notin K(\theta-\ep)$. Then $D^j(x)\in(\theta-\ep, 1-\theta+\ep)$ for some $j\ge 0$. So either (i) $D^j(x)\in(\theta+\ep,1-\theta-\ep)$; or (ii) $D^j(x)\in(\theta-\ep,\theta+\ep]$, in which case $D^{n_0+j}(x)\in(\theta+\ep,1-\theta-\ep)$ by \eqref{eq:in-forbidden-interval}; or (iii) $D^j(x)\in[1-\theta-\ep,1-\theta+\ep)$, in which case $D^j(1-x)\in(\theta-\ep,\theta+\ep]$ so that $D^{n_0+j}(1-x)\in (\theta+\ep,1-\theta-\ep)$ by \eqref{eq:in-forbidden-interval}, and then also $D^{n_0+j}(x)\in (\theta+\ep,1-\theta-\ep)$. In all three cases, we conclude that $x\notin K(\theta+\ep)$, and thus $K(\theta+\ep)\subset K(\theta-\ep)$. Since the map $\theta\mapsto K(\theta)$ is non-decreasing, we obtain $K(\theta-\ep)=K(\theta+\ep)$, and therefore $\theta\not\in\cR$.
\end{proof}

It is not difficult to see that $\theta\in K(\theta)$ if and only if $\theta=0$ or $2\theta\in \Ga_{AC}$; hence we have
\begin{equation} \label{eq:R-identity}
\cR\backslash\{0\}=\frac12\Ga_{AC}.
\end{equation}
(This identity is well known; see for instance \cite{Tiozzo_2018} or \cite{Bon-Car-Ste-Giu-2013}.)
The set $\cR$ has several other interpretations: It is also the set of external angles of rays landing on the real slice of the Mandelbrot set \cite{Tiozzo_2015}, or the set of angles parametrizing real quadratic minor laminations \cite{Bon-Car-Ste-Giu-2013}.

\subsection{The tuning map and its properties} \label{subsec:Psi}

Recall that to each admissible word $\sa=a_1\dots a_m$ not of the form $\mathbf{b}\overline{\mathbf{b}}$ corresponds a tuning window $I=I(\ss)=[\theta_L,\theta_R]$ given by $\theta_L=.\ss^\f$ and $\theta_R=.\ss\overline{\ss}^\f$, where $\ss=0a_1\dots a_{m-1}$, and the entropy $h(\theta)$ is constant on $[\theta_L,\theta_R]$. To this tuning window we associate a map $\Phi_I: \Omega\to\Omega$ defined by
\[
\Phi_I((x_i))=\Sigma_{x_1}\Sigma_{x_2}\Sigma_{x_3}\dots, \qquad (x_i)\in\Omega,
\]
where
\[
\Sigma_0:=\ss, \qquad \Sigma_1:=\overline{\ss}.
\]
Now set
\begin{equation} \label{eq:Psi-def}
\Psi_I:=\pi_2\circ \Phi_I\circ \pi_2^{-1}.
\end{equation}
(For definiteness, when $\theta\in[0,1)$ has two binary expansions, we let $\pi_2^{-1}(\theta)$ be the one ending in $0^\f$.) We call $\Psi_I$ a {\em tuning map}. {To our knowledge, the above explicit form of the tuning map was first given by Douady \cite[Section 4.6]{Douady-1995}.} Since $\ss\prec\overline{\ss}$, we see that $\Psi_I$ is strictly increasing, and it maps $[0,1/2]$ into $I$. Furthermore, $\Psi_I(\cR)=\cR\cap I$ (see the proof of \cite[Proposition 9.3]{Tiozzo_2015}).

\begin{lemma} \label{lem:Psi-Holder}
Let $I=I(\ss)$ be a tuning window. Then for each $k\in\N$, the restriction of $\Psi_I$ to $\pi_2(\mathbf{X}_k)$ is bi-H\"older continuous with exponent $|\ss|$.
\end{lemma}

\begin{proof}
This follows from Lemma \ref{lem:bi-Lip} and \eqref{eq:Psi-def}, since the map $\Phi_I$ is bi-H\"older continuous with exponent $|\ss|$ under the metric $\gamma$ defined in (\ref{eq:metric}).
\end{proof}

\begin{corollary} \label{cor:Psi-Holder-on-R}
Let $I=I(\ss)$ be a tuning window. For each $\ep>0$, the restriction of $\Psi_I$ to $\cR\cap[0,1/2-\ep]$ is bi-H\"older continuous with exponent $|\ss|$.
\end{corollary}

\begin{proof}
Given $\ep>0$, choose $k\in\N$ so that $2^{-k}<\ep$. Then Lemma \ref{lem:R} implies $\cR\cap[0,1/2-\ep]\subseteq \pi_2(\mathbf{X}_k)$. {(Note that $0^\f\in \mathbf{X}_k$.)} Thus we are done by applying Lemma \ref{lem:Psi-Holder}.
\end{proof}

For a tuning window $I$ and $\theta\in[0,1/2]$, set
\[
K_I(\theta):=K(\theta)\cap I.
\]
The following basic fact about the tuning map will be of crucial importance in Section \ref{sec:level-set}.

\begin{lemma} \label{lem:K-under-tuning}
Let $I=I(\ss)$ be a tuning window generated by the word $\ss$.
Let $\hat{\theta}\in\cR$, and $\theta=\Psi_I(\hat{\theta})$. Then
\begin{equation} \label{eq:K-renormalization}
K_I(\theta)=\Psi_I\big(K(\hat{\theta})\cap[0,1/2]\big),
\end{equation}
and so
\begin{equation} \label{eq:K-theta-tuning-dimension}
\dim_H K_I(\theta)=\frac{1}{|\ss|}\dim_H K(\hat{\theta}).
\end{equation}
\end{lemma}

\begin{proof}
This can be essentially deduced from \cite[Section 9.1]{Tiozzo_2015}, but for the reader's convenience we include a short proof. Let $x=.x_1x_2x_3\dots\in K_I(\theta)$. Then $.\ss^\f\leq x\leq .\ss\overline{\ss}^\f$, so $x_1\dots x_m=\ss$, where $m:=|\ss|$. Observe that $(x_i)$ cannot contain a block $\ss\sb$ with $\sb\succ\overline{\ss}$, for otherwise there would be an integer $n$ such that $D^n(x)\geq .\ss\sb 0^\f>.\ss\overline{\ss}^\f\ge\theta$ and $D^n(x)\leq .\ss 1^\f\leq 1/2<1-\theta$, contradicting that $x\in K(\theta)$. Similarly, $(x_i)$ cannot contain a block $\overline{\ss}\sb$ with $\sb\prec\ss$. Since furthermore, $x_{n+1}\dots x_{n+m}\lle\ss$ or $x_{n+1}\dots x_{n+m}\lge\overline{\ss}$ for all $n\geq 0$, it follows that each block $\ss$ or $\overline{\ss}$ in $(x_i)$ can only be followed by another block $\ss$ or $\overline{\ss}$. Thus, $x=\Psi_I(\hat{x})$ for some $\hat{x}$. Since $\Psi_I$ is strictly increasing with $\Psi_I(\hat{\theta})=\theta$ and $\Psi_I(1-\hat{\theta})=1-\theta$, it follows that in fact $\hat{x}\in K(\hat{\theta})$. Finally, since $(x_i)$ begins with $\ss$, we have $\hat{x}\in[0,1/2]$. Thus,
\[
K_I(\theta)\subseteq\Psi_I\big(K(\hat{\theta})\cap[0,1/2]\big).
\]
The reverse inclusion follows similarly, hence we have \eqref{eq:K-renormalization}. Now for $\hat{\theta}<1/2$, there is a positive integer $k$ such that for each $x\in K(\hat{\theta})\cap[0,1/2]$, the binary expansion of $x$ does not contain $k$ consecutive 1's and does not contain $k$ consecutive 0's after the first 1. Hence $K(\hat{\theta})\cap[0,1/2]\subseteq \pi_2(\mathbf{X}_k)$, and Lemma \ref{lem:Psi-Holder} implies that the restriction of $\Psi_I$ to $K(\hat{\theta})\cap[0,1/2]$ is bi-H\"older continuous with exponent $|\ss|$. Thus, \eqref{eq:K-theta-tuning-dimension} follows from \eqref{eq:K-renormalization} and the symmetry of $K(\hat{\theta})$ about $1/2$.
\end{proof}

\begin{lemma} \label{lem:K_I-continuity}
For a tuning window $I=I(\ss)=[\theta_L,\theta_R]$, the function $\theta\mapsto \dim_H K_I(\theta)$ is continuous in $(\theta_L,\theta_R)$.
\end{lemma}

\begin{proof}
Since $K_I(\theta)$ is constant on each connected component of $I\backslash\cR$, it suffices to consider $\theta\in\cR\cap (\theta_L,\theta_R)$. We prove right continuity; the argument for left continuity is the same.

If $\theta$ is isolated in $\cR$ from the right, then $K_I(\theta')=K_I(\theta)$ for all $\theta'>\theta$ sufficiently close to $\theta$, and right continuity follows.

Otherwise, we can find a sequence $(\theta_n)$ in $\cR\cap(\theta_L,\theta_R)$ such that $\theta_n\searrow \theta$, and it is sufficient to show that $\dim_H K_I(\theta_n)\to \dim_H K_I(\theta)$, in view of the monotonicity of the set-valued map $\theta'\mapsto K_I(\theta')$.
Let $\hat{\theta}:=\Psi_I^{-1}(\theta)$ and $\hat{\theta}_n:=\Psi_I^{-1}(\theta_n)$. Then $\hat{\theta}_n\searrow \hat{\theta}$, so by \eqref{eq:K-theta-tuning-dimension} and the continuity of $\theta'\mapsto \dim_H K(\theta')$,
\[
\dim_H K_I(\theta_n)=\frac{1}{|\ss|}\dim_H K(\hat{\theta}_n)\to \frac{1}{|\ss|}\dim_H K(\hat{\theta})=\dim_H K(\theta),
\]
completing the proof.
\end{proof}

Next, let $\CC^h$ denote the set of infinitely renormalizable angles, i.e. those $\theta\in [0,1/2)$ that belong to infinitely many tuning windows. It is clear that $\CC^h$ is uncountable, and not hard to show that it has zero Hausdorff dimension (see, for instance, \cite[Proposition 1.4]{Allaart-Kong-2018}). Furthermore, $\CC^h\subset\mathcal{R}$.

Recall from (\ref{eq:Eh}) that $E^h$ is the bifurcation set of the map $\theta\mapsto\dim_H K(\theta)$. Now for a tuning window $I=I(\ss)$, we define the {\em relative bifurcation set}
\[
E^h(I):=\{\theta\in I: \dim_H K_I(\theta-\ep)<\dim_H K_I(\theta+\ep)\ \ \forall \ep>0\}.
\]
It follows from Lemma \ref{lem:K-under-tuning} that
\begin{equation} \label{eq:bifurcation-tuning}
E^h(I)=\Psi_I(E^h).
\end{equation}

\begin{lemma} \label{lem:R-decomposition}
\begin{equation} \label{eq:R-decomposition}
\cR=\cR_{iso}\cup \CC^h \cup E^h\cup \bigcup_I E^h(I),
\end{equation}
with the union disjoint, where $\cR_{iso}$ denotes the set of isolated points of $\cR$, and the last union is over all tuning windows $I$.
\end{lemma}

\begin{proof}
For a tuning window $I=I(\ss)$, the tuning windows properly contained in $I$ are precisely the intervals $[\theta_L(\mathbf{t}),\theta_R(\mathbf{t})]=[.\mathbf{t}^\f,.\mathbf{t}\overline{\mathbf{t}}^\f]$, where $\mathbf{t}^\f=\Phi_I(\mathbf{u}^\f)$ and $\mathbf{t}\overline{\mathbf{t}}^\f=\Phi_I(\mathbf{u}\overline{\mathbf{u}}^\f)$ for some tuning base $\mathbf{u}$. (See \cite{Douady-1995}.) Thus, by \eqref{eq:bifurcation-tuning} and the strict monotonicity of $\Psi_I$, $I\backslash E^h(I)$ consists of the interval $[\theta_L(\ss),\Psi_I(\theta_F))$ and the interiors of the tuning intervals properly contained in $I$. Furthermore, it is well known that all points of $\cR$ in $[0,\theta_F)$ belong to $\cR_{iso}$, so the same is true for all points of $\cR$ in $[\Psi_I(0),\Psi_I(\theta_F))=[\theta_L(\ss),\Psi_I(\theta_F))$, since $\Psi_I: \cR\mapsto\cR\cap I$ is a homeomorphism. These observations yield \eqref{eq:R-decomposition}. That the union is disjoint also follows easily; for instance, if $I$ and $J$ are tuning windows with $J\subsetneq I$, then $E^h(J)\subseteq J$ whereas $E^h(I)$ is disjoint from $J$, so $E^h(I)\cap E^h(J)=\emptyset$. Finally, $E^h\cap \cR_{iso}=\emptyset$ since $E^h$ does not have isolated points by the continuity of the entropy function $h(\theta)$; and similarly, $E^h(I)\cap \cR_{iso}=\emptyset$ for each tuning window $I$.
\end{proof}

\section{The local dimension of $\cR$} \label{sec:R-local-dimension}

The following analogue of Theorem \ref{main:Ga-dimension} is essentially Theorem 1.7 of Tiozzo \cite{Tiozzo_2015}. Since Tiozzo's proof is spread over several sections and involves many concepts, we include a more condensed proof here for the reader's convenience, also because the construction of the ``reset block" in the proof below will be needed again in Section \ref{sec:level-set}. Note also that, while Tiozzo considers iteration of points under the tent map, we prefer here to work in the symbol space, since the constructions in Section \ref{sec:level-set} are given by concatenating infinitely many finite words.

\begin{theorem} \label{thm:R-and-K}
For each $\theta\in[0,1/2]$ we have
\[
\dim_H (\cR\cap[0,\theta])=\dim_H K(\theta).
\]
\end{theorem}

\begin{proof}
One inequality is clear: From Lemma \ref{lem:R} and the definition of $K(\theta)$ we see that $\cR\cap[0,\theta]\subseteq K(\theta)$, and hence $\dim_H(\cR\cap[0,\theta])\leq \dim_H K(\theta)$.

The reverse inequality is more involved. Note by Theorem \ref{thm:K-theta} that $\dim_H K(\theta)=0$ for any $\theta\in[0, \theta_F]$. Furthermore, by (\ref{eq:EhL}) the map $\theta\mapsto\dim_H K(\theta)$ is constant in each connected component of $(\theta_F, 1/2]\setminus E^h_L$. Hence without loss of generality we may assume that $\theta\in E^h_L$. Then $\theta\in\cR$, so $2\theta\in \Ga_{AC}$ by \eqref{eq:R-identity}, but $2\theta$ is not an isolated point of $\Ga_{AC}$, in view of \eqref{eq:EhL} and Lemma \ref{lem:isolated-point-of-V}. Let $\alpha=(\alpha_i):=b(2\theta)$. Thus $\overline{\alpha}\prec \sigma^n(\alpha)\lle \alpha$ for all $n\geq 0$, so $\al$ satisfies the hypotheses of \cite[Lemma 4.1]{Komornik_Loreti_2007}, and hence there is a strictly increasing sequence $(m_j)$ such that for each $j$, the word $\al_1\dots\al_{m_j}^-$ is admissible; that is,
\begin{equation} \label{eq:admissible}
\overline{\al_1\ldots \al_{m_j-i}}\lle \al_{i+1}\ldots \al_{m_j}^-\prec \al_1\ldots \al_{m_j-i}\quad\forall ~1\le i<m_j.
\end{equation}

Let $0<\theta'<\theta$. We will show that $\cR\cap[0,\theta]$ contains a Lipschitz copy of the set
\[
\hat{K}(\theta'):=K(\theta')\cap[1/2,3/4).
\]
Since $b(2\theta')\prec b(2\theta)=(\alpha_i)$, we can find $j$ large enough so that $m:=m_j$ satisfies \eqref{eq:admissible} and
\begin{equation} \label{eq:august-6-3}
 \al_1\dots\al_{m-1} 0^\f\succ b(2\theta').
\end{equation}

Next, since $\theta\in E^h_L$ implies $\theta>\theta_F$, we have $\al=b(2\theta)\succ b(2\theta_F)=\tau_1\tau_2\dots=1101\dots$. So there is an index $l_0\ge 3$ such that $\al_i=\tau_i$ for $1\leq i\leq l_0-1$, and $\al_{l_0}>\tau_{l_0}$.

Now set $i_0=m$, and for $\nu=0,1,2,\dots$, proceed inductively as follows. If $i_\nu<l_0$, then stop. Otherwise, let $i_{\nu+1}$ be the largest integer $i$ such that
\[
\overline{\al_{i_\nu-i+1}\dots \al_{i_\nu}}=\al_1\dots\al_i^-,
\]
or $i_{\nu+1}=0$ if no such $i$ exists. It is easy to check that $i_{\nu+1}<i_\nu$ for each $\nu$, so this process will stop after some finite number $N$ of steps, with $i_N<l_0$. One can check that $\al_1\dots\al_{i_\nu}^-$ is admissible for each $\nu=1,2,\dots,N-1$, i.e., each word $\al_1\ldots \al_{i_\nu}^-$ satisfies the inequalities in (\ref{eq:admissible}).

For $\nu=0,1,\dots,N-1$, we now argue as follows. Let $\ss_\nu:=0\alpha_1\dots\alpha_{i_\nu-1}$. Since $\alpha_{i_\nu}=1$,
\[
b(\theta)=0\alpha_1\alpha_2\ldots\succ (0\alpha_1\dots\alpha_{i_\nu-1})^\f=\ss_\nu^\f,
\]
and since $\theta\in E^h_L$, it follows from \eqref{eq:EhL} that $b(\theta)\succ \ss_\nu\overline{\ss_\nu}^\f$, so there is a positive integer $k_\nu$ such that
\begin{equation} \label{eq:k-nu-inequality}
\alpha_1\dots\alpha_{i_\nu(k_\nu+1)-1}\succ \alpha_1\dots\alpha_{i_\nu-1}(1\overline{\alpha_1\dots\alpha_{i_\nu-1}})^{k_\nu}.
\end{equation}
Put $W_\nu:=\ss_\nu^{k_\nu}$. Finally, set $R:=W_0 W_1\dots W_{N-1}0$. 
Let $x_0:=\pi_2(R0^\f)$ be the dyadic rational determined by the block $R$, and let $|R|$ denote the length of $R$. We claim that
\begin{equation} \label{key-implication}
x_0+2^{-|R|}\hat{K}(\theta') \subseteq \cR\cap [0,\theta].
\end{equation}

Let $x\in \hat{K}(\theta')$ and set $y:=x_0+2^{-|R|}x$. Note by the construction of $R$ that $y<\theta$.
It remains to verify that $y \in \cR$, or equivalently,
\begin{equation} \label{eq:y-self-sandwich}
D^n(y)\not\in(y,1-y) \qquad\forall n\geq 0.
\end{equation}
For $n\geq |R|$, these inequalities follow immediately since $D^{|R|}(y)=x \in K(\theta')\subseteq K(y)$, where the last inclusion uses that $y>\pi_2(W_0 0^\f)>\theta'$ by \eqref{eq:august-6-3}.

For $n<|R|$, \eqref{eq:y-self-sandwich} follows from (\ref{eq:august-6-3}), (\ref{eq:k-nu-inequality}) and the admissibility of $\al_1\dots\al_{i_\nu}^-$ for $\nu=0,1,\dots,N-1$, along with the fact that for $x\in \hat{K}(\theta')$, $b(x)$ begins with $10$; see \cite[Proposition 3.17]{AlcarazBarrera-Baker-Kong-2016} for the details.

Having established \eqref{key-implication}, we finish the proof by observing that the sets $\{x\in K(\theta'): b(x)\ \mbox{begins with}\ 1^n 0\}$ and $\{x\in K(\theta'): b(x)\ \mbox{begins with}\ 0^n 1\}$, for $n=1,2,\dots$, partition $K(\theta')$ and all have the same Hausdorff dimension by the symmetry of $K(\theta')$. Since $\hat{K}(\theta')$ is one of these sets, it follows that $\dim_H \hat{K}(\theta')=\dim_H K(\theta')$. Thus, $\dim_H(\cR\cap[0,\theta])\geq \dim_H K(\theta')$. Letting $\theta'\nearrow\theta$ and using Theorem \ref{thm:K-theta}, it follows that
$\dim_H(\cR\cap[0,\theta])\geq \dim_H K(\theta)$.
\end{proof}

Using Theorem \ref{thm:R-and-K} we can calculate the local dimension of $\cR$:

\begin{proposition} \label{cor:R-level-1}
If $\theta\in E^h$, then
\[
\lim_{\ep\to 0}\dim_H\big(\cR\cap(\theta-\ep,\theta+\ep)\big)=\dim_H K(\theta).
\]
\end{proposition}

\begin{proof}
Take $\theta\in E^h$. Then for all $\ep>0$ we have $\dim_H K(\theta+\ep)>\dim_H K(\theta-\ep)$. Furthermore, by Theorem \ref{thm:R-and-K},
\begin{gather*}
\dim_H\big(\cR\cap[0,\theta-\ep]\big)=\dim_H K(\theta-\ep),\\
\dim_H\big(\cR\cap [0,\theta+\ep]\big)=\dim_H K(\theta+\ep).
\end{gather*}
Hence, using that $\dim_H(A\cup B)=\max\{\dim_H A,\dim_H B\}$, we conclude that
\[
\dim_H\big(\cR\cap(\theta-\ep,\theta+\ep)\big)=\dim_H K(\theta+\ep)\to \dim_H K(\theta) \quad\mbox{as $\ep\searrow 0$},
\]
where the final convergence follows from Theorem \ref{thm:K-theta} (iv). This completes the proof.
\end{proof}

We can extend the last result further by using the tuning maps $\Psi_I$.

\begin{proposition} \label{prop:R-local-dimension}
Let $\theta\in\mathcal{R}\backslash E^h$. Then
\begin{equation} \label{eq:gamma-local-dimension}
\lim_{\ep\to 0}\dim_H\big(\mathcal{R}\cap(\theta-\ep, \theta+\ep)\big)=\begin{cases}
0 & \mbox{if $\theta\in\cR_{iso}\cup\CC^h$},\\
\frac{1}{|\ss|}\dim_H K(\hat{\theta}) & \mbox{otherwise},
\end{cases}
\end{equation}
where in the second case, $I=I(\ss)=[\theta_L,\theta_R]$ is the smallest tuning window such that $\theta\in(\theta_L,\theta_R)$, and $\hat{\theta}:=\Psi_I^{-1}(\theta)$.
\end{proposition}

\begin{proof}
If $\theta\in\cR_{iso}$, then clearly $\dim_H(\cR\cap(\theta-\ep, \theta+\ep))=0$ for sufficiently small $\ep>0$.

Suppose $\theta\notin E^h\cup \CC^h\cup\cR_{iso}$. Then by \eqref{eq:R-decomposition} there is a unique tuning window
$I=I(\ss)=[\theta_L, \theta_R]$ such that $\theta\in E^h(I)$, and thus $\hat{\theta}=\Psi_I^{-1}(\theta)\in E^h$. 
So by Proposition \ref{cor:R-level-1} and Corollary \ref{cor:Psi-Holder-on-R} it follows that
\begin{equation} \label{eq:july18-1}
\begin{split}
  \lim_{\ep\to 0}\dim_H\big(\cR\cap(\theta-\ep, \theta+\ep)\big)&=\lim_{\ep\to 0}\dim_H\big(\cR\cap I\cap(\theta-\ep, \theta+\ep)\big)\\
  &=\lim_{\eta\to 0}\dim_H\Psi_I\big(\cR\cap(\hat \theta-\eta, \hat \theta+\eta)\big)\\
  &=\frac{1}{|\ss|}\lim_{\eta\to 0}\dim_H\big(\cR\cap(\hat \theta-\eta, \hat \theta+\eta)\big)\\
	&=\frac{1}{|\ss|}\dim_H K(\hat \theta).
\end{split}
\end{equation}
This proves \eqref{eq:gamma-local-dimension} for $\theta\in\cR\setminus(E^h\cup\CC^h\cup\cR_{iso})$.

Finally, let $\theta\in\CC^h$. Then $\theta$ lies in infinitely many tuning windows $I=I(\ss)$, so (\ref{eq:july18-1}) holds for infinitely many tuning bases $\ss$ and corresponding $\hat \theta\in E^h$. Since $\dim_H K(\hat \theta)\le 1$ for any $\hat \theta\in E^h$, it follows that $\lim_{\ep\to 0}\dim_H(\cR\cap(\theta-\ep, \theta+\ep))=0$, completing the proof.
\end{proof}

\section{Proof of Theorem \ref{main2:Ga-dimension}} \label{sec:Gamma-dimension}

In this section we consider the set $\wS(t)=\set{x\in[0,1): \tilde\tau(x)\ge t}$ for $t\in[0,1]$, and prove Theorem \ref{main2:Ga-dimension}. It will be convenient to introduce the sets
\begin{equation*} 
\wB(t):=\{x\in[0,1): D^n(x)\in[t,1-t]\ \forall n\geq 0\}, \qquad t\in[0,1],
\end{equation*}
where we interpret $\wB(t)$ to be empty when $t>1/2$. We will show in Proposition \ref{prop:S_tau-equal-dimension} that $\dim_H\wB(t)=\dim_H\wS(t)$ for all $t\in[0,1]$.

The sets $\wB(t)$ are closely related to the sets $K(\theta)$ from \eqref{eq:K-theta}. Namely, for $1/3\leq\theta\leq 1/2$, we have
\begin{equation} \label{eq:B-K-inclusions}
\wB(1-2\theta)\subseteq K(\theta)\subseteq \{0\}\cup \bigcup_{n=0}^\infty D^{-n}\big(\wB(1-2\theta)\big).
\end{equation}
(Both inclusions are proper {for $\theta<1/2$}: the first since $0\in K(\theta)\backslash \wB(1-2\theta)$, and the second since $\bigcup_{n=0}^\infty D^{-n}\big(\wB(1-2\theta)\big)$ is dense in $[0,1)$ while $K(\theta)$ is nowhere dense.) 
The inclusions \eqref{eq:B-K-inclusions}, which are easy to verify, were essentially proved in \cite[Proposition 3.1]{Alcaraz_Barrera_2014}. They imply that
\begin{equation} \label{eq:A-B-relation}
\dim_H K(\theta)=\dim_H \wB(1-2\theta), \qquad \theta\in [1/3,1/2].
\end{equation}
But the equality holds also for $0\leq\theta<1/3$, since $K(\theta)=\{0\}$ and $\wB(1-2\theta)=\emptyset$ in this case.

Recall from (\ref{eq:thue-morse}) the Thue-Morse sequence $(\tau_i)_{i=0}^\f=01101001\ldots$. Set
\begin{equation}\label{eq:tkl}
\tilde{t}_F:=\pi_2(\overline{\tau_1\tau_2\ldots})=1-\pi_2(\tau_1\tau_2\ldots)=1-2\theta_F\approx .1751.
\end{equation}
From \eqref{eq:B-K-inclusions}, \eqref{eq:A-B-relation} and Theorem \ref{thm:K-theta} we immediately obtain:

\begin{lemma} \label{lem:survivor-dimension}
  The map $\varphi: t\mapsto \dim_H \wB(t)$ is a non-increasing devil's staircase, i.e., it is non-increasing, continuous, locally constant almost everywhere in $[0,1]$, and $\varphi(0)>\varphi(1)$. Furthermore,
	\begin{enumerate}[{\rm(i)}]
    \item If $1/3<t\le 1$, then $\wB(t)=\emptyset$.

    \item If $\tilde{t}_F<t\le 1/3$, then $\set{1/3,2/3}\subseteq \wB(t)$ and $\wB(t)$ is at most countable.

    \item If $t=\tilde{t}_F$, then $\wB(t)$ is uncountable and $\dim_H \wB(t)=0$.

    \item If $0\le t<\tilde{t}_F$, then $\dim_H \wB(t)>0$.
  \end{enumerate}
\end{lemma}

We can now deduce similar properties of the sets $\wS(t)$, using the following inclusions.

\begin{lemma} \label{S_tau-inclusions}
We have for each $t\in(0,1]$ the inclusions
\begin{equation} \label{eq:inclusion}
\wB(t)\subseteq \wS(t)\subseteq\bigcup_{n=0}^\f D^{-n}(\wB(t'))\quad\forall\, 0\leq t'<t.
\end{equation}
Moreover, the first inclusion holds also for $t=0$.
\end{lemma}

\begin{proof}
First fix $t\in[0,1]$, and take $x\in \wB(t)$. Then $D^n(x)\geq t$ and $D^n(1-x)\geq t$ for all $n\geq 0$, so
\[
\tilde{\tau}(x)=\min\set{\liminf_{n\to\f}D^n(x), ~\liminf_{n\to\f}D^n(1-x)}\ge t.
\]
In other words, $x\in \wS(t)$. This proves the first inclusion.

Next, fix $t\in(0,1]$ and $0\leq t'<t$. Take $x\in \wS(t)$. Then $\tilde{\tau}(x)\geq t>t'$, so there exists $N\in\N$ such that $D^n(x)>t'$ and $D^n(1-x)>t'$ for all $n\ge N$. This implies that $D^N(x)\in \wB(t')$, and thus $x\in D^{-N}(\wB(t'))$. This proves the second inclusion.
\end{proof}

\begin{proposition} \label{prop:S_tau-equal-dimension}
For any $t\in[0,1]$ we have
 \[
  \dim_H \wS(t)=\dim_H \wB(t).
 \]
\end{proposition}

\begin{proof}
This follows from Lemma \ref{S_tau-inclusions}, the countable stability of Hausdorff dimension, and the continuity of $t\mapsto\dim_H \wB(t)$ from Lemma \ref{lem:survivor-dimension}, since for each $n\geq 0$, $D^{-n}(\wB(t'))$ is a finite union of affine images of $\wB(t')$.
\end{proof}

\begin{theorem} \label{th:survivor set}
The map $\varphi: t\mapsto \dim_H \wS(t)$ is a non-increasing devil's staircase on $[0, 1]$, i.e., $\varphi$ is  continuous and locally constant almost everywhere in $[0, 1]$, and $\varphi(0)>\varphi(1)$. Furthermore:
\begin{enumerate}[{\rm(i)}]
\item If $1/3<t\le 1$, then $\wS(t)=\emptyset$.

\item If $\tilde{t}_F<t\le 1/3$, then $\wS(t)$ is countably infinite.

\item If $t=\tilde{t}_F$, then $\wS(t)$ is uncountable and $\dim_H \wS(t)=0$.

\item If $0\le t<\tilde{t}_F$, then $\varphi(t)=\dim_H \wS(t)>0$. Furthermore, $\varphi(t)<\varphi(0)=1$ for any $t>0$.

\end{enumerate}
\end{theorem}

\begin{proof}
By Proposition \ref{prop:S_tau-equal-dimension} and Lemma \ref{lem:survivor-dimension} it follows that the map $t\mapsto\dim_H \wS(t)$ is a non-increasing devil's staircase.
So it only remains to prove the four items (i)--(iv). If $x\in \wB(t)$, then $x/2^k\in \wS(t)$ for any $k\in\N$.
Hence, the items (i)--(iv) follow from Lemma \ref{S_tau-inclusions} and the corresponding four items of Lemma \ref{lem:survivor-dimension}. Note that to get $\dim_H \wS(\tilde{t}_F)=0$ we used the continuity of $t\mapsto \dim_H \wB(t)$ from Lemma \ref{lem:survivor-dimension}.
\end{proof}

From Theorem \ref{thm:R-and-K} we next deduce the analogue of Theorem \ref{main:Ga-dimension} for the doubling map:

\begin{corollary} \label{prop:gamma-and-B}
For each $t\in (0,1]$,
\[
\dim_H(\tGa\cap[t,1])=\dim_H \wS(t).
\]
\end{corollary}

\begin{proof}
Given $t\in(0,1]$, let $\theta=(1-t)/2$, so $t=1-2\theta$. Since $\tGa=1-\Ga_{AC}=1-2(\cR\setminus\set{0})$, we have
\[
\dim_H(\tGa\cap[t,1])=\dim_H(\cR\cap[0,\theta])=\dim_H K(\theta)=\dim_H \wB(t)=\dim_H \wS(t),
\]
where the first equality follows since Hausdorff dimension is invariant under scale, and the other equalities follow successively from Theorem \ref{thm:R-and-K}, \eqref{eq:A-B-relation}, and Proposition \ref{prop:S_tau-equal-dimension}. 
\end{proof}

\begin{proof}[Proof of Theorem \ref{main2:Ga-dimension}]
Statement (ii) follows from Corollary \ref{prop:gamma-and-B}; (iii) follows from (ii) and Theorem \ref{th:survivor set}; and (i) follows from (ii) and (iii) since
\[
\dim_H\tGa\geq \lim_{t\searrow 0}\dim_H\big(\tGa\cap[t,1]\big)=\lim_{t\searrow 0}\dim_H\wS(t)=\dim_H\wS(0)=1.
\]
\end{proof}

\section{The level sets of $\tilde\tau$ and local dimension of $\tGa$} \label{sec:level-set}

In this section we consider the level sets $\wL(t)$ for $t\in\tGa$, and prove Theorem \ref{main2:level-set}. {In the process we also compute $\dim_H \wL(t)$ for $t\in\tGa\backslash \wE$.}
We let $\tGa_{iso}$ denote the set of all isolated points of $\tGa$.

\begin{proposition} \label{prop:level-set-isolated}
  If $t\in \tGa_{iso}$, then $\widetilde{L}(t)$ is countably infinite.
\end{proposition}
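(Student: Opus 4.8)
The plan is to exploit the characterization $t\in\Ga_{iso}\iff T^n(t)=1-t$ for some $n\in\N$ (Theorem~\ref{main:Ga-topology}(i) together with Lemma~\ref{lem:isolated-point-of-V}). Fix such a $t$ and let $n$ be minimal with $T^n(t)=1-t$; since $t\in C$ this means the coding $\de(t)=\de_1\dots\de_n\overline{\de(t)}$, i.e. if I write $\sa:=\de_1(t)\dots\de_n(t)$ then $\de(t)=\sa\,\overline{\sa}\,\overline{\sa}\dots$? No — rather $\de(t)=(\sa\overline{\sa})^\infty$ only if applying $T^n$ again returns to $t$; in general $\de(t)=\overline{(\al_i)}$ with $(\al_i)=\al(q(t))$ a periodic sequence of period dividing $2n$. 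The clean way: $t$ isolated means $q(t)$ is an isolated point of $\vb$, which by \cite[Theorem 2.6]{Komornik_Loreti_2007} happens exactly when $\al(q(t))=\sa^\infty$ for an admissible word $\sa$ with an extra irreducibility property; in coding terms $\de(t)=\overline{\sa}^\infty$. So $\S_\tau(t)=\vs_{q(t)}$ with $q(t)$ isolated, hence by Lemma~\ref{lem:critical-unique-expansion} (or directly the structure of $\vs_q$ at isolated $q$) $\vs_{q(t)}$ is at most countable; in fact $\vs_{q(t)}\subseteq\{0,1\}^*\{\sa^\infty,\overline{\sa}^\infty\}$, a countable set.

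Next I would convert this into a statement about $L_\tau(t)$. By Lemma~\ref{S_tau-inclusions}, $S_\tau(t)\subseteq\bigcup_{\mathbf i\in\{0,1\}^*}f_{\mathbf i}\circ\pi(\S_\tau(t'))$ for any $t'<t$, but that uses $t'<t$ and bounds $S_\tau(t)$ rather than $L_\tau(t)$; instead I want: if $\tau(x)=t$ then, since $\liminf_n T^n(x)=t$ or $\liminf_n T^n(1-x)=t$, and $\min\{T^n(x),T^n(1-x)\}$ eventually stays $\ge t-\ep$, a tail of the coding of $x$ eventually lies in $\S_\tau(t-\ep)$ — but for $t$ isolated we can do better: there is $\ep>0$ with $\Ga\cap(t-\ep,t)=\emptyset$, hence $\S_\tau(t-\ep)=\S_\tau(t)$ (the set-valued map $t'\mapsto\S_\tau(t')$ being constant on components of $[0,1]\setminus\Ga$, cf. the proof of Lemma~\ref{lem:survivor-dimension}). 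Therefore any $x$ with $\tau(x)\ge t$ has a tail in $\S_\tau(t)$, i.e. $x\in\bigcup_{\mathbf i}f_{\mathbf i}\circ\pi(\S_\tau(t))$, so $S_\tau(t)$ — and a fortiori $L_\tau(t)=S_\tau(t)\setminus S_\tau(t')$ for $t'>t$ near $t$, or just $L_\tau(t)\subseteq S_\tau(t)$ — is a countable union of Lipschitz images of the countable set $\pi(\S_\tau(t))$, hence countable. That $L_\tau(t)$ is \emph{infinite} is immediate because it is dense in $C$ (noted right before the statement of Theorem~\ref{main:level-set}, or directly: prepending any word $\mathbf i$ to a coding in $\S_\tau(t)$ with $\de(t)\in\S_\tau(t)$ — which holds since $t\in\Ga=\Delta$ — gives a point of $L_\tau(t)$, since the tail $\liminf$ is unchanged by a finite prefix and equals exactly $t$).

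Concretely the steps would be: (1) invoke Theorem~\ref{main:Ga-topology}(i)/Lemma~\ref{lem:isolated-point-of-V} to get $n$ with $T^n(t)=1-t$, deduce $\de(t)=\overline{\sa}^\infty$ for an admissible word $\sa$, and conclude $q(t)=\Phi(t)$ is isolated in $\vb$; (2) cite \cite[Theorem 2.6]{Komornik_Loreti_2007} and Lemma~\ref{lem:critical-unique-expansion} to conclude $\vs_{q(t)}=\S_\tau(t)$ is countable (explicitly: every element is eventually periodic with tail $\sa^\infty$ or $\overline{\sa}^\infty$); (3) use that $t$ is isolated in $\Ga$ to fix $\ep>0$ with $\S_\tau(t-\ep)=\S_\tau(t)$; (4) show $L_\tau(t)\subseteq S_\tau(t)\subseteq\bigcup_{\mathbf i\in\{0,1\}^*}f_{\mathbf i}(\pi(\S_\tau(t)))$ via the tail argument, giving countability; (5) show $L_\tau(t)$ is infinite via density (prepending arbitrary finite prefixes to $\de(t)$, using $\de(t)\in\S_\tau(t)$ because $t\in\Ga=\Delta$). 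The main obstacle is step (2): pinning down precisely why an isolated base $q(t)$ forces $\vs_{q(t)}$ to be countable, i.e. translating the isolatedness criterion $\si^n(\overline{\al(q)})=\al(q)$ into a full description of $\vs_{q(t)}$ as a countable set of eventually-periodic sequences. This is where I would lean most heavily on \cite[Theorem 2.6]{Komornik_Loreti_2007} and on the structure of $\vs_q$ for $q$ in a fundamental interval's left endpoint as developed in \cite{AlcarazBarrera-Baker-Kong-2016}, rather than reproving it; everything else is routine given the machinery already in place.
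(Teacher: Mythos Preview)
Your step (2) contains a genuine error: it is \emph{not} true that $\vs_{q(t)}=\S_\tau(t)$ is countable whenever $t$ is isolated in $\Ga$. Lemma~\ref{lem:critical-unique-expansion} gives countability of $\vs_q$ only for $q<q_{KL}$, and isolated points of $\vb$ are not confined to that range. For a concrete counterexample, take $t=\pi((000111)^\infty)$. Then $\si^3(\de(t))=\overline{\de(t)}$, so $t\in\Ga_{iso}$ by Theorem~\ref{main:Ga-topology}(i); but $\overline{\de(t)}=(111000)^\infty\succ\tau_1\tau_2\ldots$, so $q(t)>q_{KL}$ and hence $\dim_H\vs_{q(t)}>0$. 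Your claimed inclusion $\vs_{q(t)}\subseteq\{0,1\}^*\{\sa^\infty,\overline{\sa}^\infty\}$ is therefore false, and step (4) no longer yields countability of $L_\tau(t)$.

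What the paper does instead is bound $L_\tau(t)$ not by $\S_\tau(t)$ but by the \emph{difference} $\S_\tau(t)\setminus\bigcup_{r>t}\S_\tau(r)$. Lemma~\ref{lem:level-set-contained} gives
\[
L_\tau(t)\subseteq\bigcup_{\mathbf i\in\{0,1\}^*} f_{\mathbf i}\circ\pi\!\left(\S_\tau(t')\setminus\bigcup_{r>t}\S_\tau(r)\right)
\]
for any $t'<t$; choosing $t'$ close enough to $t$ (using isolation) gives $\S_\tau(t')=\S_\tau(t)$. The key input is then \cite[Theorem~1.2]{Allaart-2017}, which says that for $q(t)$ isolated in $\vb$ the set $\vs_{q(t)}\setminus\bigcup_{p<q(t)}\vs_p$ is at most countable---a much finer statement than countability of $\vs_{q(t)}$ itself. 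Your steps (3), (4) and (5) are otherwise sound, and the infinity argument matches the paper's; the missing ingredient is precisely this passage to the difference set and the citation of \cite{Allaart-2017} rather than Lemma~\ref{lem:critical-unique-expansion}.
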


\begin{proof}
Let $t\in \tGa_{iso}$.
By Lemma \ref{lem:isolated-point-of-V} (i) there exists $k\in\N$ such that $D^k(t)=1-t$, so $b(t)$ is periodic with period $2k$, and $\sigma^k(b(t))=\overline{b(t)}$.
Let $x\in \wL(t)$, and assume without loss of generality that
\begin{equation} \label{eq:liminf-limsup}
\liminf D^n(x)=t \qquad\mbox{and} \qquad \limsup D^n(x)\leq 1-t.
\end{equation}
Then $b(x)$ contains arbitrarily long prefixes of $b(t)$. We will show that $b(x)$ must in fact end in $b(t)$; that is, there is some $j\in\N$ such that $\sigma^j(b(x))=b(t)$. It then follows that $\wL(t)$ is at most countable.

Suppose no such $j$ exists. Then for each $n\in\N$ there is an $m_n\in\N$ such that $\sigma^{n}(b(x))$ begins with $b_1(t)\dots b_{m_n-1}(t)$, but $b_{n+m_n}(x)\neq b_{m_n}(t)$.
Set $\tilde{m}_n:=m_n\!\mod 2k$ if this value is nonzero, and $\tilde{m}_n:=2k$ otherwise. If $b_{n+m_n}(x)<b_{m_n}(t)$ for infinitely many $n$, then for each such $n$ we have
\[
b_{n+m_n-\tilde{m}_n+1}(x)\dots b_{n+m_n}(x)\prec b_{m_n-\tilde{m}_n+1}(t)\dots b_{m_n}(t)
=b_1(t)\dots b_{\tilde{m}_n}(t)
\]
by the periodicity of $b(t)$. Since $\tilde{m}_n$ is uniformly bounded by $2k$, this implies $\liminf D^n(x)<t$, contradicting \eqref{eq:liminf-limsup}.

Otherwise, $b_{n+m_n}(x)>b_{m_n}(t)$ for infinitely many $n$. Here we consider two cases. If $\tilde{m}_n\leq k$, then
\[
b_{n+m_n-\tilde{m}_n-k+1}(x)\dots b_{n+m_n}(x)\succ b_{m_n-\tilde{m}_n-k+1}(t)\dots b_{m_n}(t)
=\overline{b_1(t)\dots b_{\tilde{m}_n+k}(t)}.
\]
On the other hand, if $k<\tilde{m}_n\leq 2k$, then
\[
b_{n+m_n-\tilde{m}_n+k+1}(x)\dots b_{n+m_n}(x)\succ b_{m_n-\tilde{m}_n+k+1}(t)\dots b_{m_n}(t)
=\overline{b_1(t)\dots b_{\tilde{m}_n-k}(t)}.
\]
Since one of these cases must hold for infinitely many $n$, it follows that $\limsup D^n(x)>1-t$, again contradicting \eqref{eq:liminf-limsup}. Hence $b(x)$ must end in $b(t)$, and $\wL(t)$ is at most countable.

On the other hand, for {\em any} $t\in\tGa$ the level set $\wL(t)$ is at least countably infinite, because $\wL(t)\ne\emptyset$ by the definition of $\tGa$, and if $x\in\wL(t)$ then $D^{-n}(\{x\})\subseteq \wL(t)$ for every $n\in\N$.
\end{proof}

Next, recall from (\ref{eq:bifurcation-dyadic}) that $\wE$ is the bifurcation set of the map $\tilde{\delta}: t\mapsto\dim_H \wS(t)$, and from (\ref{eq:Eh}) that $E^h$ is the bifurcation set of the map $\theta\mapsto\dim_H K(\theta)$. By Proposition \ref{prop:S_tau-equal-dimension} and \eqref{eq:A-B-relation} it follows that $\widetilde{E}=1-2E^h$.

\begin{lemma} \label{lem:Lambda_N}
For $N\geq 3$, let
\[
\La_N:=\left\{(d_i)_{i\geq 0}: d_0\ldots d_{2N}=01^{2N-1}0\ \textrm{and}\ 0^N\prec d_{kN+1}\ldots d_{kN+N}\prec 1^N~\forall k\ge 2\right\}.
\]
Then $\pi_2(\Lambda_N)\subseteq E^h$.
\end{lemma}

\begin{proof}
Take $(d_i)_{i\geq 0}\in\La_N$. Note that $(d_i)$ does not contain a block of $2N-1$ consecutive 0's or 1's after digit $d_{2N}$. Suppose, by way of contradiction, that there is a word $\ss=s_0\dots s_{m-1}$ with $s_0=0$ and $s_1=1$ such that
\begin{equation} \label{eq:in-this-plateau}
\ss^\f\prec (d_i) \prec \ss\overline{\ss}^\f.
\end{equation}
If $d_0\dots d_m=\ss 0$, then $m\geq 2N$ and the word $\ss$ begins with $01^{2N-1}$, so \eqref{eq:in-this-plateau} implies
\[
d_{m+1}\dots d_{m+2N-1}\lge s_1\dots s_{2N-1}=1^{2N-1},
\]
contradicting that $(d_i)\in\La_N$.

Thus, $d_0\dots d_m=\ss 1$. This implies $m\neq 2N$, so either $m<2N$ or $m>2N$. In the first case, we have necessarily $\ss=01^{m-1}$ and so $\ss\overline{\ss}^\f=01^{m-1}(10^{m-1})^\f$. By the assumption \eqref{eq:in-this-plateau}, this is only possible if $m=2N-1$. But then $\ss\overline{\ss}^\f=01^{2N-1}(0^{2N-2}1)^\f$ and $(d_i)$ begins with $\ss 1=01^{2N-1}$, i.e. $d_0\dots d_{2N-1}=01^{2N-1}$; so $d_{2N+1}\dots d_{3N}=0^N$ since $N\geq 3$ implies $2N-3\geq N$. This again contradicts that $(d_i)\in\La_N$.

If $m>2N$, then $s_1\dots s_{2N-1}=d_1\dots d_{2N-1}=1^{2N-1}$, so $\overline{\ss}^\f$ begins with $10^{2N-1}$, and \eqref{eq:in-this-plateau} implies $d_{m+1}\dots d_{m+2N-1}=0^{2N-1}$, once again contradicting that $(d_i)\in\La_N$.

Therefore, $\pi_2((d_i))$ does not lie in the interior of any plateau $(\theta_L,\theta_R)$ of $h(\theta)$. This means $\pi_2((d_i))\in E^h$, in view of \eqref{eq:Eh}.
\end{proof}

\begin{proposition} \label{prop:dimension-of-E}
The bifurcation set $\wE$ is a Cantor set, and $\dim_H \wE=\dim_H E^h=1$.
\end{proposition}

\begin{proof}
We first observe, using Proposition \ref{prop:S_tau-equal-dimension} and \eqref{eq:A-B-relation} that $\widetilde{E}=1-2E^h$, and since $E^h$ is a Cantor set, $\widetilde{E}$ is a Cantor set as well. 

Next, we show that $\dim_H \wE=1$. Define $\La_N$ as in Lemma \ref{lem:Lambda_N}. Since
\[
\dim_H\La_N=\frac{\log(2^N-2)}{N\log 2}\to 1 \qquad\mbox{as $N\to\infty$}
\]
and by Lemma \ref{lem:bi-Lip} the restriction of $\pi_2$ to $\La_N$ is bi-Lipschitz, it follows from Lemma \ref{lem:Lambda_N} that $\dim_H E^h=1$, and then also $\dim_H \widetilde{E}=\dim_H(1-2E^h)=1$. 
\end{proof}

For the remainder of this section, we introduce the symbolic sets
$$\S(t):=\pi_2^{-1}(\wB(t)), \qquad t\in[0,1].$$
Note that if $t\in\TT'$ we have
\[
\S(t)=\{(d_i)\in\Omega: b(t)\lle\sigma^n((d_i))\lle \overline{b(t)}\ \forall n\geq 0\}.
\]

\begin{lemma} \label{cor:bi-Lipschitz-in-B}
For each $0<t<1$ and each subset $F\subseteq \S(t)$,
\[
\dim_H \pi_2(F)=\dim_H F.
\]
\end{lemma}

\begin{proof}
Note that for $t\in(1/2,1)$ we have $\S(t)=\emptyset$. For $0<t\le 1/2$, the result follows from Lemma \ref{lem:bi-Lip}, since there exists $k\in\N$ such that $\S(t)\subseteq \mathbf{X}_k$.
\end{proof}

Observe that in particular, Lemma \ref{cor:bi-Lipschitz-in-B} and Proposition \ref{prop:S_tau-equal-dimension} imply
\begin{equation} \label{eq:S-B-dimension-equality}
\dim_H \S(t)=\dim_H \wB(t)=\dim_H \wS(t).
\end{equation}

\begin{proposition} \label{prop:Level-set-bifurcation-set}
For any $t\in \wE$ we have $\dim_H \wL(t)=\dim_H \wS(t)$.
\end{proposition}

\begin{proof}
Take $t\in \wE$. Since $\wL(t)\subseteq \wS(t)$, it is clear that $\dim_H \wL(t)\le\dim_H \wS(t)$.	To prove the reverse inequality, we initially fix $t'>t$ and $\eta>0$, and we construct a subset of $\wL(t)$ whose Hausdorff dimension is at least $\dim_H \wS(t')-\eta$. By the continuity of $\dim_H \wS(t)$ it then follows that $\dim_H \wL(t)\geq \dim_H \wS(t)$.

Set $\theta:=(1-t)/2$, so $\theta\in E^h$. We consider two cases: (I) $\theta\in E^h_L$; and (II) $\theta\in E^h\setminus E^h_L$.

Case I. $\theta\in E^h_L$. We set $(\alpha_i):=b(2\theta)$. As in the proof of Theorem \ref{thm:R-and-K}, there is an increasing sequence $(m_j)$ of integers such that $\alpha_1\dots\alpha_{m_j}^-$ is admissible for each $j$. (See Definition \ref{def:admissible}.) We may choose $m_1$ so that in addition,
\begin{equation} \label{eq:start-this-way}
\alpha_1\dots\alpha_{m_1-1}0^\f\succ b(2\theta'),
\end{equation}
where $\theta':=(1-t')/2$. The existence of $m_1$ in (\ref{eq:start-this-way}) follows since $\theta>\theta'$, and thus $(\al_i)=b(2\theta)\succ b(2\theta')$.
Now for each $j$, we build a {\em reset block} $R_j$ as follows.
Set $i_0=m_j$, and construct, as in the proof of Theorem \ref{thm:R-and-K}, a strictly decreasing sequence $(i_\nu: \nu=0,1,\dots,N)$ of positive integers and a sequence of exponents $(k_\nu: \nu=0,1,\dots,N-1)$ such that $\al_1\dots\al_{i_\nu}^-$ is admissible and
\begin{equation} \label{eq:big-enough-power}
\alpha_1\dots\alpha_{i_\nu(k_\nu+1)-1}\succ \alpha_1\dots\alpha_{i_\nu-1}(1\overline{\alpha_1\dots\alpha_{i_\nu-1}})^{k_\nu}, \qquad \nu=0,1,\dots,N-1.
\end{equation}
Let $\ss_\nu:=0\al_1\dots\al_{i_\nu-1}$ for $\nu=0,1,\dots,N-1$, so $\ss_0=0\alpha_1\dots\alpha_{m_j-1}$.
We set $W_\nu:=\ss_\nu^{k_\nu}$ for $\nu=0,1,\dots,N-1$, and $R_j:=W_0W_1\dots W_{N-1}0$. 
Note that the length of $R_j$ depends only on $m_j$.

Let $\mathcal N=(N_j)_{j\in\N}$ be an increasing sequence of positive integers, which we assume grows much faster than the sequence $(m_j)$. Construct sequences $(d_i)\in\Omega$ as follows: $(d_i)$ is an infinite concatenation of blocks $A_1 B_1 A_2 B_2\dots$, where $A_1$ is any word of length $N_1$ allowable in $\S(t')$; and inductively, for $j\geq 1$,
\begin{itemize}
\item let $C_j$ be the longest suffix of $A_j$ that is a prefix of either $b(\theta)$ or $\overline{b(\theta)}$. Now choose $B_j$ so that $C_j B_j=R_j$ if $C_j$ is a prefix of $b(\theta)$, or $C_j B_j=\overline{R_j}$ otherwise. This is possible since $C_j$ has length at most $m_1-1$ by \eqref{eq:start-this-way} and the fact that $A_j$ comes from $\S(t')$.
\item $A_{j+1}$ is any word of length $N_{j+1}$ allowable in $\S(t')$ beginning with $0$ if the last digit of $B_j$ is $1$, or with $1$ if the last digit of $B_j$ is $0$.
\end{itemize}
Let $F_{\mathcal N}$ denote the set of all such sequences $(d_i)=A_1 B_1 A_2 B_2\dots$. We argue that $\pi_2(F_{\mathcal N})\subseteq \wL(t)$. Take $(d_i)\in F_{\mathcal N}$ and let $x=\pi_2((d_i))$.
From the construction of the reset blocks $R_k$ it is not difficult to see that
\[
b(t)=\overline{(\alpha_i)}\prec\sigma^n((d_i))\prec (\alpha_i)=\overline{b(t)} \qquad \forall n\geq 0.
\]
Thus, $\tilde{\tau}(x)\geq t$. On the other hand, since $(d_i)$ contains arbitrarily long prefixes of $b(t)$ or $\overline{b(t)}$, we have either $\liminf D^n(x)\leq t$ or $\liminf D^n(1-x)\leq t$; in other words, $\tilde{\tau}(x)\leq t$. Hence, $\tilde{\tau}(x)=t$, and $x\in \wL(t)$.

Since we can let the ``free" blocks $A_j$, which have length $N_j$, grow much faster than the ``forced" blocks $B_j$ (which have length at most $m_j+|R_j|$), it is intuitively clear that, by choosing $N_j$ large enough, we can ensure
\begin{equation} \label{eq:same-dimension}
\dim_H F_{\mathcal N}\geq \dim_H \S(t')-\eta.
\end{equation}
(A fully rigorous proof of this fact could be modeled on the proof of \cite[Theorem 5]{Allaart-Kong-2018}.)
The sets $F_{\mathcal N}$ are contained in $\S(\hat{t})$ for any $\hat{t}<t$, {so by Lemma \ref{cor:bi-Lipschitz-in-B} we also have}
\[
\dim_H \wL(t)\geq \dim_H \pi_2(F_{\mathcal N})\geq \dim_H \wS(t')-\eta.
\]

\medskip
Case II. $\theta\in E^h\backslash E^h_L$. The construction is slightly different in this case. Note by \eqref{eq:Eh} and \eqref{eq:EhL}, that
\[
b(\theta)=\ss\overline{\ss}^\f
\]
for some word $\ss=s_0\dots s_{m-1}$ such that $s_0=0$ and $\sa:=s_1\dots s_{m-1}0$ is admissible.

Write $(\alpha_i):=b(2\theta)=\sigma(b(\theta))$. Note that $\alpha_1\dots\alpha_m^-=\sa$. Now construct a strictly decreasing sequence $(i_\nu: \nu=0,1,\dots,N)$ of positive integers as in the proof of Theorem \ref{thm:R-and-K}, starting with $i_0=m$. Although here $\theta\not\in E^h_L$, there still exist exponents $k_\nu$ for $\nu=1,\dots,N-1$ satisfying \eqref{eq:big-enough-power}, as we now explain.

Since $\theta\in E^h\backslash E^h_L$, $\theta=\theta_R(\ss)$ is the right endpoint of a maximal tuning window $[\theta_L(\ss),\theta_R(\ss)]$, and $\theta_L(\ss)\in E^h_L$ by \eqref{eq:EhL}. Set $(\alpha_i'):=b(2\theta_L(\ss))$. Then $(\alpha_i')=\sa^\f=(\alpha_1\dots\alpha_m^-)^\f$, and since $i_\nu<m$ for $\nu\geq 1$, it follows from the proof of Theorem \ref{thm:R-and-K} that there exist positive integers $k_\nu, \nu=1,2,\ldots, N-1$ such that
\begin{align*}
\al_1'\ldots \al_{i_\nu(k_\nu+1)-1}'&\succ \al_1'\ldots\al_{i_\nu-1}'\big(1\overline{\al_1'\ldots\al_{i_{\nu}-1}'}\big)^{k_\nu}\\
&=\al_1\ldots\al_{i_\nu-1}(1\overline{\al_1\ldots\al_{i_{\nu}-1}})^{k_\nu}.
\end{align*}
Since $(\alpha_i')=b(2\theta_L)\prec b(2\theta)=(\al_i)$, this proves (\ref{eq:big-enough-power}).

Now put $\ss_\nu:=0\al_1\dots\al_{i_\nu-1}$ and $W_\nu:=\ss_\nu^{k_\nu}$ for $\nu=1,2,\dots,N-1$, and let $R:=W_1\dots W_{N-1}0$. (If $N=1$, let $R$ be the ``word" $0$.)

Let $\mathcal N=(N_j)_{j\in\N}$ be an increasing sequence of positive integers, which we assume grows at least exponentially fast. As in Case I we construct sequences $(d_i)\in\Omega$ which are infinite concatenations of blocks $A_1 B_1 A_2 B_2\dots$, where $A_1$ is any word of length $N_1$ allowable in $\S(t')$; and inductively, for $j\geq 1$,
\begin{itemize}
\item let $C_j$ be the longest suffix of $A_j$ that is a prefix of either $b(\theta)$ or $\overline{b(\theta)}$. Then $B_j$ is the shortest word such that {$C_j B_j=\ss\overline{\ss}^{\ell_j}\overline{R}$ or $\overline{\ss}\ss^{\ell_j}R$ for some $\ell_j\geq j$}.
\item $A_{j+1}$ is chosen exactly as in Case I.
\end{itemize}
We note that $\ell_j>j$ only if $C_j$ already includes the word $\ss\overline{\ss}^j$ or its reflection. In this case $B_j$ simply ``finishes out" the last (possibly incomplete) copy of $\ss$ or $\overline{\ss}$ in $C_j$ before appending the reset block $R$ or $\overline{R}$. Thus, $|B_j|\leq (j+1)|\ss|+|R|$.

Let $F_{\mathcal N}$ denote the set of all such sequences $(d_i)=A_1 B_1 A_2 B_2\dots$. As in Case I, we can let the sequence $(N_j)$ grow fast enough so that \eqref{eq:same-dimension} holds, since the ``free" blocks $A_j$ from $\S(t')$, which have length $N_j$, grow much faster than the ``forced" blocks $B_j$, whose length is at most $(j+1)|\ss|+|R|$.

Now let $x=\pi_2((d_i))\in \pi_2(F_{\mathcal N})$. Then, due to the blocks $B_j$, $(d_i)$ contains either the word $\ss\overline{\ss}^{\ell}$ or the word $\overline{\ss}\ss^{\ell}$ for infinitely many $\ell$, and since $\sigma(\ss\overline{\ss}^{\ell})\to b(2\theta)=\overline{b(t)}$, it follows that either $\liminf D^n(x)\leq t$ or $\liminf D^n(1-x)\leq t$; in other words, $\tilde{\tau}(x)\leq t$.

On the other hand, although $(d_i)\not\in\S(t)$, the only values of $n$ for which $\sigma^n((d_i))\prec b(t)$ or $\sigma^n((d_i))\succ \overline{b(t)}$ are those for which $\sigma^{n-1}((d_i))$ begins with $\ss\overline{\ss}^{\ell_j}\overline{R}$ or $\overline{\ss}\ss^{\ell_j}R$, as can be seen from the admissibility of the words $\al_1\dots\al_{i_\nu}^-$, $\nu=1,\dots,N-1$. Since $\ell_j\geq j$, the words $\ss\overline{\ss}^{\ell_j}\overline{R}$ and $\overline{\ss}\ss^{\ell_j}R$ converge to $b(\theta)$ and $\overline{b(\theta)}$, respectively in the order topology as $j\to\infty$, and since $\sigma(b(\theta))=\overline{b(t)}$, this implies $\tilde{\tau}(x)\geq t$.
Hence, $\tilde{\tau}(x)=t$ and $x\in \wL(t)$. We conclude that $\pi_2(F_{\mathcal N})\subseteq \wL(t)$, and we have our desired subset.

By Cases I and II, we conclude that $\dim_H \wL(t)=\dim_H \wS(t)$ for all $t\in \wE$.
\end{proof}

\begin{remark}
Intuitively, the idea of the construction in the above proof is as follows. (We will focus on Case I, but the idea is similar for Case II.) We wish to construct points in $x\in\wL(t)$ by alternating very long finite words from $\S(t')$ (to get a dimension close to $\dim_H \wS(t')$) with ever longer initial segments of $b(t)$ or $\overline{b(t)}$ (to force $\tilde{\tau}(x)\leq t$). However, following a prefix of $b(t)$ or $\overline{b(t)}$ by an arbitrary word from $\S(t')$ might violate the conditions of membership of $\wS(t)$, hence the reset block $R_j$ is needed to act as a bridge between the $j$th prefix of $b(t)$ or $\overline{b(t)}$ and the $(j+1)$st word from $\S(t')$. In Case II the reset blocks themselves violate membership of $\wS(t)$, but the ``overshoot" becomes arbitrarily small in the limit.
\end{remark}

\subsection{The case $t\not\in \wE$: renormalization}

Having computed $\dim_H \wL(t)$ for $t\in \wE$, we now turn to the computation of this dimension for arbitrary $t\in\tGa\backslash\tGa_{iso}$. Our goal is to prove Proposition \ref{prop:tilde-level-set} below. We recall the tuning map $\Psi_I$ from Subsection \ref{subsec:Psi}.

\begin{proposition} \label{prop:tilde-level-set}
  Let $t\in\tGa\backslash \wE$, and write $t=1-2\theta$ with $\theta\in \cR\backslash E^h$. Then
  \begin{equation} \label{eq:renormalized-level-set-dimension}
  \dim_H \wL(t)=\begin{cases}
    0 & \mbox{if $\theta\in\cR_{iso}\cup\CC^h$}\\
    \frac{1}{|\ss|}\dim_H K(\hat \theta) & \textrm{otherwise},
  \end{cases}
  \end{equation}
where in the second case, $I=I(\ss)=[\theta_L,\theta_R]$ is the smallest tuning window such that $\theta\in(\theta_L,\theta_R)$, and $\hat{\theta}:=\Psi_I^{-1}(\theta)$.
\end{proposition}

The proof relies on the following technical lemma.

\begin{lemma} \label{lem:level-set-inclusion}
Let $\theta\in\cR\cap I$ for some tuning window $I=I(\ss)$, and let $\hat{\theta}:=\Psi_I^{-1}(\theta)$. Then
\begin{equation} \label{eq:level-set-inclusion}
\wL(1-2\theta)\supseteq \Psi_I\Big(\wL(1-2\hat{\theta})\Big),
\end{equation}
and consequently,
\begin{equation} \label{eq:level-set-dimension-inequality}
\dim_H \wL(1-2\theta)\geq \frac{1}{|\ss|}\dim_H \wL(1-2\hat{\theta}).
\end{equation}
\end{lemma}

\begin{proof}
Write $\hat{\theta}=.\hat{\theta}_1\hat{\theta}_2\dots$, and note that $\hat{\theta}_1=0$. Take $\hat{x}=.\hat{x}_1\hat{x}_2\dots\in \wL(1-2\hat{\theta})$. Then $\tilde{\tau}(\hat{x})=1-2\hat{\theta}$, and we may assume without loss of generality that $\liminf D^n(1-\hat{x})=1-2\hat{\theta}$, or equivalently, $\limsup D^n(\hat{x})=2\hat{\theta}$. Set $x:=\Psi_I(\hat{x})$ and $t:=1-2\theta$. We must show $\tilde{\tau}(x)=t$.

We first show that $\tilde{\tau}(x)\leq t$. Since $\limsup D^n(\hat{x})=2\hat{\theta}$, there is for each $k\in\N$ an index $n_k$ such that $\hat{x}_{n_k+1}\dots \hat{x}_{n_k+k-1}=\hat{\theta}_2\dots \hat{\theta}_k$. We may assume that $\hat{x}_{n_k}=0=\hat{\theta}_1$ for all but finitely many $k$, as otherwise we would have
\[
\limsup D^n(\hat{x})\geq .1\hat{\theta}_2\hat{\theta}_3\dots=\frac12(1+.\hat{\theta}_2\hat{\theta}_3\dots)=\frac12(1+2\theta)>2\theta
\]
(since $2\theta<1$), a contradiction. Thus, for all large enough $k$, $\hat{x}_{n_k}\dots \hat{x}_{n_k+k-1}=\hat{\theta}_1\dots\hat{\theta}_k$. Hence $.\Sigma_{\hat{x}_{n_k}}\dots\Sigma_{\hat{x}_{n_k+k-1}}=.\Sigma_{\hat{\theta}_1}\dots\Sigma_{\hat{\theta}_k}$, which converges to $\Psi_I(\hat{\theta})=\theta$ as $k\to\infty$. This yields $D^{|\ss|(n_k-1)+1}(x)\to 2\theta$ (since the first digit of $\Sigma_{\hat{x}_{n_k}}=\Sigma_0=\ss$ is a $0$), and so
\[
\tilde{\tau}(x)\leq \lim_{k\to\f} D^{|\ss|(n_k-1)+1}(1-x)=1-2\theta=t.
\]

The proof of the reverse inequality is more involved. Write $\theta=.\theta_1\theta_2\dots$ and set $x=.x_1x_2x_3\dots$. Suppose, by way of contradiction, that $\tilde{\tau}(x)<t$, and assume without loss of generality that $\liminf D^n(1-x)<t=1-2\theta$. Then $\limsup D^n(x)>2\theta$, so we can find a sequence $(n_k)$ of indices such that $\lim_{k\to\f} D^{n_k}(x)>2\theta$ and $x_{n_k-1}=0$ for every $k$. Then $\lim_{k\to\f} D^{n_k-1}(x)\in(\theta,1/2)$, so there is some word $w_1\dots w_l$ that occurs infinitely often in the sequence $(x_i)$ such that $w_1\dots w_l\succ \theta_1\dots \theta_l$ and $w_1=0$. We may assume without loss of generality that $l>m:=|\ss|$, so
\begin{equation} \label{eq:w-word-and-t-word}
w_1\dots w_{m+1}\succeq \theta_1\dots \theta_{m+1}.
\end{equation}

Now let $j$ be any integer such that $w_1\dots w_l=x_{j+1}\dots x_{j+l}$.
We claim that $j$ must be a multiple of $m$. 
This may be seen as follows. Since $\ss$ is a tuning base, the word $\sa:=s_1\dots s_{m-1}0$ satisfies \eqref{eq:admissible-definition}. In other words,
\begin{equation} \label{eq:admissible-s}
\overline{s_1\dots s_{m-i}}\lle s_{i+1}\dots s_{m-1}0\prec s_1\dots s_{m-i} \qquad\forall\, 1\leq i<m.
\end{equation}
Recall also that $\theta_1\dots \theta_{m}=\ss$, since $\hat{\theta}_1=0$. Since $x=\Psi_I(\hat{x})$, we may write $x=.\b_1\b_2\dots$, where each $\b_k\in\{\ss,\overline{\ss}\}$.
Suppose now that $j$ is not a multiple of $m$, and write $j=(k-1)m+i$, where $k\in\N$ and $1\leq i<m$. Then $w_1\dots w_{m-i}$ is a suffix of $\b_k$, and $w_{m-i+1}\dots w_{m+1}$ is a prefix of $\b_{k+1}$. We now have four cases:

(i) $\b_k=\b_{k+1}=\ss$. Then $w_1\dots w_{m-i}=s_i\dots s_{m-1}$ and $w_{m-i+1}\dots w_{m+1}=s_0\dots s_i$. Since $w_1=0$, this implies
\[
w_1\dots w_{m-i+1}=0s_{i+1}\dots s_{m-1}0\prec 0s_1\dots s_{m-i}=s_0\dots s_{m-i}=\theta_1\dots\theta_{m-i+1}
\]
by \eqref{eq:admissible-s}, contradicting \eqref{eq:w-word-and-t-word}.

(ii) $\b_k=\ss$, $\b_{k+1}=\overline{\ss}$. Then $w_1\dots w_{m-i}=s_i\dots s_{m-1}$ and $w_{m-i+1}\dots w_{m+1}=\overline{s_0\dots s_i}$. So
\[
w_1\dots w_{m-i+1}=0s_{i+1}\dots s_{m-1}1\lle s_0\dots s_{m-i}
\]
and
\[
w_{m-i+2}\dots w_{m+1}=\overline{s_1\dots s_i}\lle s_{m-i+1}\dots s_{m-1}0.
\]
The last inequality is in fact strict, because $s_i=w_1=0$ and so $w_{m+1}=\overline{s_i}=1$. Hence, $w_1\dots w_{m+1}\prec s_0\dots s_{m-1}0\leq \theta_1\dots\theta_{m+1}$, again contradicting \eqref{eq:w-word-and-t-word}.

(iii) $\b_k=\overline{\ss}$, $\b_{k+1}=\ss$. Then $w_1\dots w_{m-i}=\overline{s_i\dots s_{m-1}}$ and $w_{m-i+1}\dots w_{m+1}=s_0\dots s_i$, so
{\begin{align*}
w_1\dots w_{m-i+1}&=0\,\overline{s_{i+1}\dots s_{m-1}}\,0\prec 0\,\overline{s_{i+1}\dots s_{m-1}0}\\
&\lle 0 s_1\dots s_{m-i}=s_0\dots s_{m-i}=\theta_1\dots\theta_{m-i+1},
\end{align*}
}
again contradicting \eqref{eq:w-word-and-t-word}.

(iv) $\b_k=\b_{k+1}=\overline{\ss}$. Then $w_1\dots w_{m-i}=\overline{s_i\dots s_{m-1}}$ and $w_{m-i+1}\dots w_{m+1}=\overline{s_0\dots s_i}$. {Since $w_1=0$, this implies $s_i=1$. So
\begin{align*}
w_1\dots w_{m+1}&=0\,\overline{s_{i+1}\dots s_{m-1}s_0\dots s_i}=0\,\overline{a_{i+1}\dots a_m a_1\dots a_i}\\
&\prec 0a_1\dots a_m=s_0\dots s_{m-1}0\lle \theta_1\dots\theta_{m+1},
\end{align*}
where the second equality uses that $s_0=a_m=0$, and the first inequality follows by Lemma \ref{lem:admissible-word-property}. This once again contradicts \eqref{eq:w-word-and-t-word}.}

Now that we have established that $j$ must be a multiple of $m$, we see that there is a sequence $(\ell_k)$ of indices such that
\[
\frac12>\lim_{k\to\f}\Psi_I\big(D^{\ell_k}(\hat{x})\big)=\lim_{k\to\f} D^{m\ell_k}(x)>\theta=\Psi_I(\hat{\theta}),
\]
and so
\[
\frac12>\lim_{k\to\f}D^{\ell_k}(\hat{x})>\hat{\theta},
\]
since the binary expansion of $D^{m\ell_k}(x)$ begins with $\ss$, so the binary expansion of $D^{\ell_k}(\hat{x})$ begins with $0$. As a result,
\[
\tilde{\tau}(\hat{x})\leq\lim_{k\to\f}D^{\ell_k+1}(1-\hat{x})=1-\lim_{k\to\f}D^{\ell_k+1}(\hat{x})<1-2\hat{\theta},
\]
contradicting our initial assumption that $\hat{x}\in \wL(1-2\hat{\theta})$. Hence, $\tilde{\tau}(x)=t$. This completes the proof of \eqref{eq:level-set-inclusion}.

To derive \eqref{eq:level-set-dimension-inequality}, we define for $t\in\tGa$ the subsets
\[
\wL_k(t):=\wL(t)\cap \pi_2(\mathbf{X}_k), \qquad k\in\N,
\]
where $\mathbf{X}_k$ was defined just above Lemma \ref{lem:bi-Lip}.
If $t>0$, then there is an integer $k=k(t)$ such that for each $x\in \wL(t)$, the binary expansion of $x$ does not contain $k$ consecutive 0's or 1's after a certain point. Hence
\[
\wL(t)=\bigcup_{n=0}^\f D^{-n}(\wL_k(t)),
\]
which implies $\dim_H \wL(t)=\dim_H \wL_k(t)$. Now let $k:=k(1-2\hat{\theta})$. By Lemma \ref{lem:Psi-Holder} it follows that $\Psi_I$ is bi-H\"older continuous with exponent $|\ss|$ when restricted to $\wL_k(1-2\hat{\theta})$. Clearly from \eqref{eq:level-set-inclusion},
\[
\wL(1-2\theta)\supseteq \Psi_I\Big(\wL_k(1-2\hat{\theta})\Big),
\]
and hence
\[
\dim_H \wL(1-2\theta)\geq \frac{1}{|\ss|}\dim_H \wL_k(1-2\hat{\theta})=\frac{1}{|\ss|}\dim_H \wL(1-2\hat{\theta}),
\]
establishing \eqref{eq:level-set-dimension-inequality}.
\end{proof}

\begin{proof}[Proof of Proposition \ref{prop:tilde-level-set}]
If $\theta\in\cR_{iso}$, then $t=1-2\theta\in \tGa_{iso}$ and \eqref{eq:renormalized-level-set-dimension} follows directly from Proposition \ref{prop:level-set-isolated}.

{Suppose there is a smallest tuning window $I=I(\ss)=[\theta_L, \theta_R]$ such that $\theta\in(\theta_L,\theta_R)$. Then $\theta\in E^h(I)$ by the proof of Lemma \ref{lem:R-decomposition}, so $\hat\theta=\Psi_I^{-1}(\theta)\in E^h$ by \eqref{eq:bifurcation-tuning}. Thus, Lemma \ref{lem:level-set-inclusion}, Proposition \ref{prop:Level-set-bifurcation-set}, Proposition \ref{prop:S_tau-equal-dimension} and \eqref{eq:A-B-relation} imply}
\begin{equation} \label{eq:level-set-lower}
\dim_H \wL(t)\geq \frac{1}{|\ss|}\dim_H \wL(1-2\hat{\theta})=\frac{1}{|\ss|}\dim_H \wS(1-2\hat{\theta})=\frac{1}{|\ss|}\dim_H K(\hat{\theta}).
\end{equation}

For the upper bound, we note that $\wL(t)=\wL_1(t)\cup [1-\wL_1(t)]$, where
\begin{align*}
\wL_1(t):&=\left\{x\in \wL(t): \liminf_{n\to\f} D^n(1-x)=t\right\}\\
&=\left\{x\in \wL(1-2\theta): \limsup_{n\to\f} D^n(x)=2\theta\right\}.
\end{align*}
So it suffices to establish the upper bound in \eqref{eq:renormalized-level-set-dimension} for $\wL_1(t)$. Write $\theta=.\theta_1\theta_2\dots$, and take $x=.x_1x_2\dots\in \wL_1(t)$. Then $\limsup D^n(x)=2\theta$, which implies as in the proof of Lemma \ref{lem:level-set-inclusion} that for each $k\in\N$ there is an index $n_k$ such $x_{n_k}\dots x_{n_k+k-1}=\theta_1\dots\theta_k$, so that $|D^{n_k-1}(x)-\theta|<2^{-k}$. Hence, for large enough $k$, $D^{n_k-1}(x)\in I$.
Furthermore, for each $\theta'>\theta$ there exists $N\in\N$ such that $D^n(x)\in K(\theta')$ for any $n\ge N$. Hence, given $\theta'\in(\theta,\theta_R)$, there exists $n\in\N$ such that $D^n(x)\in K_I(\theta')$. We conclude from this discussion that
\begin{equation*}
  \wL_1(t)\subseteq\bigcup_{n=0}^\f D^{-n}\big(K_I(\theta')\big)\quad\forall \,\theta'\in(\theta,\theta_R),
\end{equation*}
so by Lemma \ref{lem:K_I-continuity} and \eqref{eq:K-theta-tuning-dimension},
\begin{equation*}
\dim_H \wL(t)=\dim_H \wL_1(t)\le \lim_{\theta'\searrow \theta} \dim_H K_I(\theta')
=\dim_H K_I(\theta)=\frac{1}{|\ss|}\dim_H K(\hat \theta).
\end{equation*}
This, together with (\ref{eq:level-set-lower}), proves (\ref{eq:renormalized-level-set-dimension}) for $\theta\in E^h(I)$.

By \eqref{eq:R-decomposition} it remains to prove (\ref{eq:renormalized-level-set-dimension}) for $\theta\in\CC^h$. Such a $\theta$ lies in infinitely many tuning windows, so $\dim_H \wL(t)\le \frac{1}{|\ss|}\dim_H K(\hat \theta)$ for infinitely many words $\ss$ and corresponding $\hat \theta\in E^h$. This implies that $\dim_H \wL(t)=0$, completing the proof.
\end{proof}



\begin{proof}[Proof of Theorem \ref{main2:level-set}]
The theorem follows from Propositions \ref{cor:R-level-1}, \ref{prop:R-local-dimension}, \ref{prop:level-set-isolated}, \ref{prop:dimension-of-E}, \ref{prop:Level-set-bifurcation-set}, and \ref{prop:tilde-level-set}, bearing in mind the relationships $\tGa=1-2(\cR\backslash\{0\})$ and $\dim_H K(\theta)=\dim_H \wB(1-2\theta)=\dim_H \wS(1-2\theta)$.
\end{proof}

\section{Proofs of the main results} \label{sec:main-proofs}

In this section we deduce Theorems \ref{main:Ga-topology}, \ref{main:Ga-dimension} and \ref{main:level-set} from their counterparts in Section \ref{sec:preliminaries}.

\begin{proof}[Proof of Theorem \ref{main:Ga-topology}]
The representation \eqref{eq:Gamma-representation} follows from \eqref{eq:Gamma-representation2} since $\Ga=\psi^{-1}(\tGa)$ and $D^n\circ \psi=\psi\circ T^n$ for all $n$.

Next, recall that the map $\psi: \Ga\to \tGa$ is a homeomorphism. Thus, by Theorem \ref{main2:Ga-topology} it follows that $\Ga$ is a closed subset of $C$, and it contains infinitely many isolated and infinitely many accumulation points. Furthermore, $0=\psi^{-1}(0)=\min\Ga$ is an accumulation point of $\Ga$, and $\rho/(1+\rho)=\pi((01)^\f)=\psi^{-1}(1/3)=\max\Ga$ is isolated in $\Ga$. Finally, statement (i) follows from Theorem \ref{main2:Ga-topology} (i).
\end{proof}

We next recall the relations \eqref{eq:psi-correspondence}. Unfortunately the map $\psi$ is not bi-H\"older continuous when restricted to $S(t)$ or even $L(t)$, so we have to be somewhat more careful.
Set
\[
S_k(t):=S(t)\cap \pi(\mathbf{X}_k), \qquad L_k(t):=L(t)\cap \pi(\mathbf{X}_k),
\]
and similarly,
\[
\wS_k(t):=\wS(t)\cap \pi_2(\mathbf{X}_k), \qquad \wL_k(t):=\wL(t)\cap \pi_2(\mathbf{X}_k),
\]
where $\mathbf{X}_k$ was defined just above Lemma \ref{lem:bi-Lip}.
Then for each $k\in\N$ and $t\in C'\backslash\{0\}$ we have
\[
S_k(t)=\psi^{-1}\big(\wS_k(\psi(t))\big), \qquad L_k(t)=\psi^{-1}\big(\wL_k(\psi(t))\big),
\]
so by Lemmas \ref{lem:bi-Holder} and \ref{lem:bi-Lip} it follows that the restriction of $\psi$ to $S_k(t)$ is bi-H\"older continuous with exponent $s(\rho)$. Hence, for each $k\in\N$ and $t\in C'\backslash\{0\}$,
\begin{equation} \label{eq:dimension-relations}
\dim_H S_k(t)=s(\rho)\dim_H \wS_k(\psi(t)), \qquad \dim_H L_k(t)=s(\rho)\dim_H \wL_k(\psi(t)).
\end{equation}

\begin{proof}[Proof of Theorem \ref{main:Ga-dimension}]
Given $t\in C'\backslash\{0\}$, we choose $k\in\N$ so that
\[
\psi(\rho t)>2^{-k}.
\]
If $x\in S(t)$, then $\tau(x)\geq t>\rho t$, so there is an integer $n_0$ such that $T^n(x)\geq\rho t$ and $T^n(1-x)\geq \rho t$ for all $n\geq n_0$. Applying $\psi$ to both sides of these inequalities, we obtain also that $D^n(\psi(x))\geq \psi(\rho t)>2^{-k}$ and $D^n(\psi(1-x))>2^{-k}$ for all $n\geq n_0$. This implies that
\[
T^{n_0}(x)\in \pi(\mathbf{X}_k) \qquad\mbox{and} \qquad D^{n_0}(\psi(x))\in \pi_2(\mathbf{X}_k),
\]
and hence,
\[
T^{n_0}(x)\in S_k(t) \qquad\mbox{and} \qquad D^{n_0}(\psi(x))\in \wS_k(\psi(t)).
\]
We conclude from this discussion that
\[
S(t)=\bigcup_{n=0}^\f T^{-n}\big(S_k(t)\big)\qquad\mbox{and}\qquad \wS(\psi(t))=\bigcup_{n=0}^\f D^{-n}\big(\wS_k(\psi(t))\big).
\]
Therefore, using \eqref{eq:dimension-relations},
\begin{equation} \label{eq:S-dim-inequality}
\dim_H S(t)=\dim_H S_k(t)=s(\rho)\dim_H \wS_k(\psi(t))=s(\rho)\dim_H \wS(\psi(t)),
\end{equation}
which combined with Theorem \ref{th:survivor set} proves (iii).

Similarly, for any $t\in C$ we can find $k\in\N$ such that $\pi^{-1}(\Ga\cap[t,1])\in \mathbf{X}_k$, so another application of Lemma \ref{lem:bi-Lip} yields that the restriction of $\psi$ to $\Ga\cap[t,1]$ is bi-H\"older continuous with exponent $s(\rho)$. Hence, for all $t\in C'\backslash\{0\}$,
\[
\dim_H\big(\Ga\cap[t,1]\big)=s(\rho)\dim_H\big(\tGa\cap[\psi(t),1]\big)=s(\rho)\dim_H \wS(\psi(t))=\dim_H S(t),
\]
and by extending to $t\in(0,1]$ (easy exercise) we obtain (ii). Finally, (i) follows from (ii) by letting $t\to 0$, using Theorem \ref{th:survivor set}, and noting that $S(0)=C$.
\end{proof}

\begin{remark} \label{rem:devil-staircase-detail}
From Theorem \ref{th:survivor set}, \eqref{eq:psi-correspondence} and \eqref{eq:S-dim-inequality} we obtain additional information about the size of $S(t)$: Let $t_F:=\psi^{-1}(\tilde{t}_F)$, and recall that $\psi^{-1}(1/3)=\rho/(1+\rho)$.
\begin{enumerate}[(i)]
\item If $\rho/(1+\rho)<t\le 1$, then $S(t)=\emptyset$.
\item If $t_F<t\le \rho/(1+\rho)$, then $S(t)$ is countably infinite.
\item If $t=t_F$, then $S(t)$ is uncountable and $\dim_H S(t)=0$.
\item If $0\le t<t_F$, then $\dim_H S(t)>0$.
\end{enumerate}
\end{remark}

\begin{proof}[Proof of Theorem \ref{main:level-set}]
From Proposition \ref{prop:level-set-isolated} and \eqref{eq:psi-correspondence} we deduce that $L(t)$ is countably infinite for $t\in \Ga_{iso}$, since $\psi:\Ga\to\tGa$ is a homeomorphism.

Next, by similar reasoning as in the proof of Theorem \ref{main:Ga-dimension}, we have
\begin{equation} \label{eq:level-set-equality}
\dim_H L(t)=s(\rho)\dim_H \wL(\psi(t)),
\end{equation}
and so Theorem \ref{main2:level-set} implies
\begin{align*}
\lim_{\ep\searrow 0}\dim_H\big(\Ga\cap[t-\ep,t+\ep]\big)&=s(\rho)\lim_{\delta\searrow 0}\dim_H\big(\tGa\cap[\psi(t)-\delta,\psi(t)+\delta]\big)\\
&=s(\rho)\dim_H \wL(\psi(t))=\dim_H L(t).
\end{align*}

Finally, $E=\psi^{-1}(\widetilde{E})$ by \eqref{eq:S-dim-inequality}, so
\[
\dim_H E=s(\rho)\dim_H \wE=s(\rho)=\dim_H \Ga,
\]
and \eqref{eq:S-dim-inequality} and \eqref{eq:level-set-equality} yield \eqref{eq:level-S}.
\end{proof}

\begin{remark} \label{rem:renormalized-level-set}
In order to obtain the Hausdorff dimension of $L(t)$ for $t\in\Ga\backslash E$, simply replace $t$ with $\psi(t)$ in Proposition \ref{prop:tilde-level-set}, and use \eqref{eq:level-set-equality}.
\end{remark}

We end the paper with a concrete example illustrating Theorems \ref{main:Ga-dimension} and \ref{main:level-set}.

\begin{example} \label{ex:longest-plateau}
  Since $E=\psi^{-1}(1-2E^h)=:\Phi(E^h)$ and $E^h$ was extensively studied in \cite{Douady-1995}, we can use $E^h$ to describe the bifurcation set $E$. Observe that the longest connected component of $[\theta_F,1]\backslash E^h$ is the interval $(\theta_L, \theta_R)$ determined by
 \[
 \theta_L=.(011)^\f\quad\textrm{and}\quad \theta_R=.011(100)^\f.
 \]
 This implies that the longest connected component of $[0, t_F]\setminus E$ is given by
 \[
 (t_L, t_R)=(\Phi(\theta_R), \Phi(\theta_L))=\big(\pi(000(110)^\f), \pi((001)^\f)\big).
 \]
 Thus, for example, if $\rho=1/3$ then the longest plateau of the map $t\mapsto \dim_H S(t)$ is $[\pi(000(110)^\f), \pi((001)^\f)]=[\frac{4}{117}, \frac{1}{13}]$ (see Figure \ref{fig:1}), and for any $t$ in this interval we have
 \begin{align*}
 \dim_H(\Ga\cap[t,1])&=\dim_H S(1/13)\\
 &=s(1/3)\dim_H\set{(d_i): (001)^\f\lle\si^n((d_i))\lle (110)^\f~\forall n\ge 0}\\
 &=\frac{\log((1 +\sqrt{5})/2)}{\log 3}.
\end{align*}
Furthermore, this is also the value of $\dim_H L(1/13)$, in view of Theorem \ref{main:level-set} (iii).
\end{example}

\section*{Acknowledgements}
{We are grateful to two anonymous referees for several helpful suggestions, and in particular to one referee who brought} the works of Douady and Tiozzo to our attention, which led to a thorough revision of the manuscript.
D.~Kong thanks De-Jun Feng for bringing the Cantor densities to his attention and pointing out their connection to open dynamical systems at {a meeting in Edinburgh in 2018.} 
P. Allaart is partially supported by Simons Foundation grant \#709869.
D.~Kong was supported by NSFC No.~11971079.


\begin{thebibliography}{10}

\bibitem{Alcaraz_Barrera_2014}
R.~Alcaraz~Barrera,
\newblock Topological and ergodic properties of symmetric sub-shifts,
\newblock {\em Discrete Contin. Dyn. Syst.}, {\bf 34} (2014), no. 11, 4459--4486.

\bibitem{AlcarazBarrera-Baker-Kong-2016}
R.~Alcaraz~Barrera, S.~Baker, and D.~Kong.
\newblock Entropy, topological transitivity, and dimensional properties of unique {$q$}-expansions.
\newblock {\em Trans. Amer. Math. Soc.}, 371(5):3209--3258, 2019.




\bibitem{Allaart-Kong-2018}
P.~Allaart and D.~Kong.
\newblock Relative bifurcation sets and the local dimension of univoque bases.
\newblock {\em Ergodic Theory Dynam. Systems}, 41(8):2241--2273, {2021.}

\bibitem{Allouche_Cosnard_1983}
J.-P. Allouche and M.~Cosnard.
\newblock It\'erations de fonctions unimodales et suites engendr\'ees par automates.
\newblock {\em C. R. Acad. Sci. Paris S\'er. I Math.}, 296(3):159--162, 1983.


\bibitem{Allouche-Cosnard-2001}
J.-P. Allouche and M.~Cosnard.
\newblock Non-integer bases, iteration of continuous real maps, and an arithmetic self-similar set.
\newblock {\em Acta Math. Hungar.}, 91(4):325--332, 2001.

\bibitem{Allouche_Shallit_1999}
J.-P. Allouche and J.~Shallit.
\newblock The ubiquitous {P}rouhet-{T}hue-{M}orse sequence.
\newblock In {\em Sequences and their applications ({S}ingapore, 1998)},
  Springer Ser. Discrete Math. Theor. Comput. Sci., pages 1--16. Springer, London, 1999.


\bibitem{BF-1992}
T.~Bedford and A.~Fisher.
\newblock Analogues of the Lebesgue density theorem of fractal sets of real and integers.
\newblock{\em Proc. London Math. Soc.} 64(3):95--124, 1992.

\bibitem{Bon-Car-Ste-Giu-2013}
C.~Bonanno, C.~Carminati, S.~Isola, and G.~Tiozzo.
\newblock Dynamics of continued fractions and kneading sequences of unimodal maps.
\newblock {\em Discrete Contin. Dyn. Syst.}, 33(4):1313--1332, 2013.

\bibitem{CT-2021}
C.~Carminati and G.~Tiozzo.
\newblock The bifurcation locus for numbers of bounded type.
\newblock{\em Ergodic Theory Dynam. Systems, doi:10.1017/etds.2021.28}, 2021.



\bibitem{Douady-1995}
A. Douady.
\newblock Topological entropy of unimodal maps: monotonicity for quadratic polynomials.
\newblock {\em Real and complex dynamical systems (Hiller?d, 1993)}, 65--87,
\newblock NATO Adv. Sci. Inst. Ser. C: Math. Phys. Sci., 464, Kluwer, Dordrecht, 1995

\bibitem{Falconer_1990}
K.~Falconer.
\newblock {\em Fractal geometry: Mathematical foundations and applications}.
\newblock John Wiley \& Sons, Ltd., Chichester, third edition, 2014.

\bibitem{Feng-Hua-Wen-2000}
D.-J. Feng, S.~Hua, and Z.-Y. Wen.
\newblock The pointwise densities of the {C}antor measure.
\newblock {\em J. Math. Anal. Appl.}, 250(2):692--705, 2000.


\bibitem{Kalle-Kong-Li-Lv-2016}
C.~Kalle, D.~Kong, W.~Li, and F.~L\"{u}.
\newblock On the bifurcation set of unique expansions.
\newblock {\em Acta Arith.}, 188(4):367--399, 2019.


\bibitem{Komornik_Loreti_2007}
V.~Komornik and P.~Loreti.
\newblock On the topological structure of univoque sets.
\newblock {\em J. Number Theory}, 122(1):157--183, 2007.


\bibitem{KLY-2021}
D.~Kong, W.~Li and Y.~Yao.
\newblock Pointwise densities of homogeneous Cantor measure and critical values.
\newblock{\em Nonlinearity}, 34: 2350--2380, 2021.

\bibitem{Lind_Marcus_1995}
D.~Lind and B.~Marcus.
\newblock {\em An introduction to symbolic dynamics and coding}.
\newblock Cambridge University Press, Cambridge, 1995.

\bibitem{Mattila_1995}
P.~Mattila.
\newblock {\em Geometry of sets and measures in {E}uclidean spaces. Fractals and rectifiability}, volume~44 of {\em Cambridge Studies in Advanced Mathematics}.
\newblock Cambridge University Press, Cambridge, 1995.

\bibitem{Raith_1989}
P.~Raith.
\newblock Hausdorff dimension for piecewise monotonic maps.
\newblock {\em Studia Math.},  94(1):17--33, 1989.

\bibitem{Tiozzo_2015}
G.~Tiozzo.
\newblock Topological entropy of quadratic polynomials and dimension of sections of the Mandelbrot set.
\newblock {\em Adv. Math.}, 273:651--715, 2015.

\bibitem{Tiozzo_2018}
G.~Tiozzo.
\newblock The local H\"older exponent for the entropy of real unimodal maps.
\newblock {\em Sci. China Math.}, 61(12):2299--2310, 2018.

\bibitem{Urbanski-1986}
M.~Urba\'{n}ski.
\newblock On {H}ausdorff dimension of invariant sets for expanding maps of a circle.
\newblock {\em Ergodic Theory Dynam. Systems}, 6(2):295--309, 1986.

\bibitem{Walters_1982}
P.~Walters.
\newblock {\em An introduction to ergodic theory}.
\newblock Springer-Verlag, New York-Berlin,  1982.

\bibitem{Wang-Wu-Xiong-2011}
J.~Wang, M.~Wu, and Y.~Xiong.
\newblock On the pointwise densities of the {C}antor measure.
\newblock {\em J. Math. Anal. Appl.}, 379(2):637--648, 2011.

\end{thebibliography}

\end{document}